\def\RR{\mathbb R}
\def\ZZ{\mathbb Z} 
\def\CC{\mathbb C} 
\def\NN{\mathbb N} 
\def\phi{\varphi}
\newtheorem{theorem}{Theorem}[section]
\newtheorem{lemma}[theorem]{Lemma}
\numberwithin{equation}{section}
\title{A Collocation Method in Spline Spaces  for the Solution of Linear Fractional Dynamical Systems}
\author{Enza Pellegrino\thanks{\noindent {\it Dip. DIIE, Universit\`{a} de L'Aquila }, Via G. Gronchi 18, 67100  L'Aquila, Italy. e-mail: \tt{enza.pellegrino@univaq.it}}, 
Laura Pezza$^\dag$, Francesca Pitolli\thanks{\noindent {\it Dip. SBAI, Universit\`{a} di Roma ''La Sapienza''} Via A. Scarpa 16, 00161 Roma, Italy. 
	e-mail: \tt{francesca.pitolli,laura.pezza@sbai.uniroma1.it} } }
\date{}
\begin{document}

\maketitle

\begin{abstract}
We used a collocation method in refinable spline space to solve a linear dynamical system having fractional derivative in time. The method takes advantage of an explicit derivation rule for the B-spline basis that allows us to efficiently evaluate the collocation matrices appearing in the method. We proof the convergence of the method. Some numerical results are shown.
\\ \\
{\bf Keywords}: Fractional differential problem, Projection method, Collocation method, B-Spline
\end{abstract}

\section{Introduction}
In recent years fractional differential models were used to describe a great variety of physical phenomena, such as the anomalous diffusion in biological tissues, the viscoelastic properties of smart materials, the growth of population in dynamical systems (see, \cite{KST 06,SaKiMa 93, Ta10} and references therein).
Even if there is a great effort in developing the theory of fractional calculus \cite{Diet,OlSp 74,Po99,SaKiMa 93}, the analytical solution of fractional differential problems can be obtained in a very few cases. This is why the literature on numerical methods to solve this kind of problems is growing rapidly (see, \cite{Ba10,LZ15} and the detailed bibliography in the more recent papers \cite{LC17,Pi18a}).
\\
In \cite{PePi 2018.a} two of the authors introduced an efficient {\em collocation method} to numerically solve fractional differential problems. 
In the method the approximating function is assumed to belong to a refinable space and its expression is determined by solving the differential problem in a set of collocation points. Thus, the method is both a projection method and a collocation method and the nonlocal behavior of the fractional derivative can easily be taken into account. 
\\
In the present paper we used this method to solve a linear dynamical system having fractional derivative in time. We assume the approximating function belongs to refinable spaces generated by the polynomial splines and we take advantage of an explicit derivation rule for the B-spline basis that allows us to efficiently evaluate the collocation matrix.
\\
The paper is organized as follows. In Section~\ref{Sec:2} we recall the definition of the Caputo fractional derivative and describe the fractional dynamical system we are interested in. We give also its analytical solution in terms of the matrix Mittag-Leffler function.
In Section~\ref{Sec:3} we describe the B-spline basis we used to construct the approximate solution to the differential system and give the explicit expression of the fractional derivatives of the basis functions. In Section~\ref{Sec:4} we analyze the refinability properties of the B-spline basis. The collocation method is described in Section~\ref{Sec:5} where its convergence is also proved. Finally, some numerical tests are provided in Section~\ref{Sec:6} while some conclusions are drawn in Section~\ref{Sec:7}. 
\section{Fractional dynamical systems}
\label{Sec:2}
Let $X(t): \RR \to \RR^m$ be a real-valued vector function, $X_0 \in \RR^m$ be a real vector and $A \in \RR^{m\times m}$ be a real matrix. We consider the following linear dynamical system: 
\begin{equation} \label{fracdynsystem}
\left \{
\begin{array}{ll}
D^{\gamma}_t X(t)=A \, X(t)\,, &  t>0\,, \quad 0<\gamma <1\,,\\ \\
X(0)=X_0\,,
\end{array}
\right.
\end{equation}
having time derivative of fractional, i.e. noninteger, order $\gamma$. 
In this context, the operator $D_t^\gamma$ denotes the {\em Caputo fractional derivative} with respect to the time $t$. For a sufficiently smooth vector function $X(t) = [x_1(t), x_2(t), \ldots, x_m(t)]^T$, the Caputo derivative is defined as 
\begin{equation} \label{Cap_fracder_vec}
D_t^\gamma X(t) := \bigl [D_t^\gamma x_1(t), D_t^\gamma x_2(t), \ldots, D_t^\gamma x_m(t) \bigr]^T\,,
\end{equation}
where
\begin{equation} \label{Cap_fracder}
D_t^{\gamma} \, x(t) := \bigl ({\cal J}^{(k-\gamma)} x^{(k)} \bigr)(t)\,, \quad k - 1 < \gamma < k\,, \quad k \in \NN\,, \quad t>0\,,
\end{equation}
and
\begin{equation} \label{RLint}
{\cal J}^{(\gamma)} x(t):=\frac 1{\Gamma(\gamma)}\,\int_0^t \, x(\tau)\,(t - \tau)^{\gamma-1} \, d\tau\,,
\end{equation}
is the {\em Riemann-Liouville integral operator}. Here,  
$
\Gamma(\gamma) 
$
denotes the Euler's gamma function. For details on fractional calculus see, for instance, \cite{Diet,KST 06, OlSp 74,SaKiMa 93}.

The existence of a unique solution to (\ref{fracdynsystem}) was proved, for instance, in \cite[\S7.1]{Diet}. A detailed analysis of positive linear systems of type (\ref{fracdynsystem}) and of their properties can be found in \cite{KR15} where the analytical solution in terms of the Mittag-Leffler function is obtained by the Laplace transform. Its explicit expression is
\begin{equation}
X(t)=E_{\gamma,1}(t^{\gamma},A)\,X_0\,, 
\end{equation}
where   
\begin{equation}
E_{\gamma,\beta}(z,A)=\sum_{k\ge0} \frac{(zA)^k}{\Gamma(\gamma k +\beta)}\,, \qquad z \in \CC\,, \quad A \in \RR^{m\times m}\,,
\end{equation}
is the matrix Mittag-Leffler function. We observe that the evaluation of $E_{\gamma,\beta}(z,A)$ is rather cumbersome (cf. \cite{GaPo 13}). An alternative expression of the analytical solution not involving the matrix Mittag-Leffler function can be found in \cite[\S7.1]{Diet}.

\section{The B-spline basis and its fractional derivatives}
\label{Sec:3}
In this section we describe the polynomial B-spline basis we will use to approximate the solution to Equation (\ref{fracdynsystem}) and give the analytical expression of its fractional derivative. 

The classical cardinal B-splines are piecewise polynomials of integer degree having breakpoints on integer knots (see \cite{dB07,Sch} for details). For our porposes, we define the cardinal B-spline through the truncated power function 
\begin{equation}  \label{trunc}
T_n(t) :=  \bigl(\max(0,t) \bigr) ^n, \quad n \in \NN\cup{0}\,,
\end{equation}
and the backward finite difference operator
\begin{equation} \label{ffdo} 
\Delta^n \, f(t):= \sum_{\ell=0}^{n} {\dbinom{n}{\ell}
	(-1)^l f(t-\ell)}\,, \quad n \in \NN\,.  
\end{equation}
Then, the cardinal B-spline of integer degree $n\ge 0$ is defined as
\begin{equation}  \label{B_spline}
B_n(t):=\frac{1}{n!}\,\Delta^{n+1} T_n (t), \qquad n \in \NN\cup{0}\,.
\end{equation}
The cardinal B-spline $B_n$ is a piecewise polynomial of degree $n$ with breakpoints on the integers, compactly supported on $[0,n+1]$ and belonging to $C^{n-1}$.
\\
On the semi-finite interval $[0,\infty)$ the integer translates
\begin{equation}
{\cal B}_n = \{B_n(t-\ell), -n \le \ell\}\,, \qquad t \in [0,\infty)\,,
\end{equation}
form a function basis for the spline space so that any spline function {\LARGE \calligra s} \, can be represented as
\begin{equation} \label{eq:rapp_f}
\mbox{\LARGE \calligra s}\,(t) = \sum_{\ell\ge -n} \, c_k \, B_n(t-\ell).
\end{equation}
As a consequence, the fractional derivative of {\LARGE \calligra s} \, can be evaluated as
\begin{equation} \label{eq:derfrac_f}
D^\gamma_t \mbox{\LARGE \calligra s}\,(t) = \sum_{\ell\ge -n} \, c_k \, D^\gamma B_n(t-\ell)\,, \qquad k-1<\gamma < k, \quad 1 \le k \le n-1.
\end{equation}
Thus, to compute the fractional derivatives of {\LARGE \calligra s} \, we need the fractional derivatives of the functions belonging to the B-spline basis $ {\cal B}_n$. 
\\
Let us denote by $B_{n,\ell}(t)$ the $\ell$-translate of $B_n$, i.e.
$$
B_{n,\ell}(t) = B_n(t-\ell)\,, \qquad \ell  \ge -n\,.
$$
First of all, we notice that when $\ell\ge 0$ the functions $B_{n,\ell}$ are {\em interior functions} having support $[\ell,\ell+n+1]$ all contained in $[0,\infty)$. Their fractional derivative can be evaluated by the differentiation rule 
\begin{equation}  \label{fracder_Bspline}
D^{\gamma}_t\, B_n(t) = \frac1{\Gamma(n-\gamma+1)} \, \Delta^{n+1} T_{n-\gamma} (t)\,,
\end{equation}
where
\begin{equation}  \label{eq:frac_trunc}
T_\gamma(t) :=  \bigl(\max(0,t) \bigr ) ^\gamma, \quad \gamma \ge 0,
\end{equation}
is the {\em fractional truncated power function} \cite{UB00}.
From (\ref{fracder_Bspline}) it follows that the fractional derivative of a polynomial B-spline is a fractional spline, i.e. a piecewice polynomial of noninteger degree. Details on fractional splines can be found in \cite{UB00}. 
\\
For $-n \le \ell \le -1$, the functions $B_{n,\ell}$ are {\em left edge functions} having support $[0,n+\ell+1]$. Their fractional derivative can be explicitly evaluated using definition (\ref{Cap_fracder}) and the differentiation rule (\ref{fracder_Bspline}) as the following theorem shows.

\begin{theorem} \label{Th:frac_der}
For $0< \gamma < 1$, the fractional derivative of the B-spline basis functions $B_{n,\ell}$ is given by
\begin{equation}\label{fracderivBint_0}
D_t^{\gamma}B_{n,\ell}(t) = \frac1{\Gamma(n-\gamma+1)} \, \Delta^{n+1} T_{n-\gamma} (t-\ell)\,, \qquad \ell \ge 0\,,
\end{equation}
and
\begin{equation}\label{fracderivB_left }
 D_{t}^{\gamma}B_{n,\ell}(t)=\frac{\Delta^{n+1}T_{n-\gamma}(t-\ell)}{\Gamma(n+1-\gamma)} - \frac{1}{\Gamma(1-\gamma)} \int_0^{-\ell} \frac{B'_n(\tau)}{(t-\ell-\tau)^{\gamma}} \, d\tau\,, \quad -n \le \ell \le -1\,.
\end{equation}
\end{theorem}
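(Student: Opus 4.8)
The plan is to start from the definition $(\ref{Cap_fracder})$ of the Caputo derivative, which for $0<\gamma<1$ (so $k=1$) specializes to
\[
D_t^\gamma B_{n,\ell}(t)=\frac{1}{\Gamma(1-\gamma)}\int_0^t B_{n,\ell}'(\tau)\,(t-\tau)^{-\gamma}\,d\tau,
\]
and to reduce both assertions to the already established rule $(\ref{fracder_Bspline})$ for the untranslated spline $B_n$ via the substitution $\tau\mapsto\tau-\ell$. The single conceptual point to track is that the Caputo operator accumulates its memory from the fixed base point $0$, whereas a naive shift of $(\ref{fracder_Bspline})$ would accumulate it from the actual left endpoint $\ell$ of the support of $B_{n,\ell}$; these coincide when $\ell\ge 0$ and differ by a boundary contribution when $\ell<0$.

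For the interior case $\ell\ge 0$ I would first note that $B_{n,\ell}(\tau)=B_n(\tau-\ell)$, together with its derivative, vanishes for $\tau<\ell$, since $B_n$ is supported on $[0,n+1]$ and is of class $C^{n-1}$. Hence the lower limit $0$ in the Caputo integral may be replaced by $\ell$ without changing its value, and the change of variable $\tau=\ell+s$ turns the integral into $\bigl(\mathcal{J}^{(1-\gamma)}B_n'\bigr)(t-\ell)=D_t^\gamma B_n(t-\ell)$. Applying $(\ref{fracder_Bspline})$ to $B_n$ and translating the argument by $\ell$ then gives $(\ref{fracderivBint_0})$ at once.

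For the left-edge case $-n\le\ell\le-1$ the support $[\ell,\ell+n+1]$ straddles the origin, so the truncation of the memory at $0$ is genuine. Here I would split the Caputo integral as $\int_0^t=\int_\ell^t-\int_\ell^0$. The piece over $[\ell,t]$ is exactly the translation-invariant fractional derivative: the same substitution $\tau=\ell+s$ converts it into $D_t^\gamma B_n(t-\ell)$, which by $(\ref{fracder_Bspline})$ equals $\frac{1}{\Gamma(n+1-\gamma)}\,\Delta^{n+1}T_{n-\gamma}(t-\ell)$, the leading term of the claimed formula. The subtracted piece over $[\ell,0]$, under the same change of variable with $s$ ranging over $[0,-\ell]$, becomes $\frac{1}{\Gamma(1-\gamma)}\int_0^{-\ell}B_n'(\tau)\,(t-\ell-\tau)^{-\gamma}\,d\tau$, reproducing the correction term and completing $(\ref{fracderivB_left })$.

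The work here is mostly bookkeeping, so the main thing to pin down is the interpretation of the rule $(\ref{fracder_Bspline})$ itself: I must ensure it is read as the Caputo derivative based at $0$ of the function $B_n$ supported on $[0,\infty)$, so that its translate legitimately represents the whole-support fractional derivative of $B_{n,\ell}$, and I must check that $B_n'$ is integrable against the weakly singular kernel $(t-\tau)^{-\gamma}$ near the endpoints so that splitting the integral and changing variables are justified. This is immediate for $n\ge 2$, where $B_n'$ is continuous, and remains valid for $n=1$, where $B_1'$ is merely bounded and piecewise constant but still integrable. Once these points are settled, formulas $(\ref{fracderivBint_0})$ and $(\ref{fracderivB_left })$ follow directly.
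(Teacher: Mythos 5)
Your proposal is correct and follows essentially the same route as the paper's proof: the same split $\int_0^t=\int_\ell^t-\int_\ell^0$ for the left-edge functions, the same change of variables reducing each piece to the translation rule (\ref{fracder_Bspline}) and to the correction integral over $[0,-\ell]$. The only difference is that you spell out the justification for the interior case (support considerations) and the integrability of $B_n'$ against the weakly singular kernel, which the paper leaves implicit.
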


\begin{proof}	
The derivation rule (\ref{fracderivBint_0}) immediately follows from (\ref{fracder_Bspline}).
Now, consider the case $-n \le \ell \le -1$. From definition (\ref{Cap_fracder}) one has
\begin{eqnarray*} 
D_{t}^{\gamma}B_{n,\ell}(t) & = & \frac{1}{\Gamma(1-\gamma)}\int_0^t \frac{B'_{n,\ell}(\tau)}{(t-\tau)^{\gamma}}\, d\tau
= \\ \\
& = & \frac{1}{\Gamma(1-\gamma)} \left ( \int_{\ell}^t\frac{B'_{n,\ell}(\tau)}{(t-\tau)^{\gamma}} \, d\tau - \int_{\ell}^0\frac{B'_{n,\ell}(\tau)}{(t-\tau)^{\gamma}} \, d\tau \right)\,.
\end{eqnarray*}
The first integral is the fractional derivative of the $\ell$-translate of $B_n$ and can be evaluated by the differentiation rule (\ref{fracderivBint_0}).
\\
As for the second integral, we get
\begin{eqnarray*}
\frac{1}{\Gamma(1-\gamma)} \int_{\ell}^0\frac{B'_{n,\ell}(\tau)}{(t-\tau)^{\gamma}} \, d\tau = \frac{1}{\Gamma(1-\gamma)} \int_{\ell}^0\frac{B'_{n}(\tau-\ell)}{(t-\tau)^{\gamma}} \, d\tau = \\ \\
\rule{1cm}{0cm} \frac{1}{\Gamma(1-\gamma)} \int_0^{-\ell}\frac{B'_{n}(\tau)}{(t-\ell-\tau)^{\gamma}} \, d\tau
\end{eqnarray*}
so concluding the proof.
\end{proof} 
\noindent
In the following theorem we explicitly evaluate the integral appearing in the left hand side of (\ref{fracderivB_left }).

\begin{theorem}
For $-n \le \ell \le -1$ the explicitly expression of the integral in (\ref{fracderivB_left }) is
\begin{equation*}
\begin{array}{ll}
\displaystyle \frac{1}{\Gamma(1-\gamma)}\displaystyle \int_0^{-\ell}\frac{B'_n(\tau)}{(t-\ell-\tau)^{\gamma}}\,d\tau = \\ \\
\displaystyle \rule{1cm}{0cm} \frac{1}{\Gamma(n+1-\gamma)} \sum_{r=0}^{-\ell-1} (-1)^r \binom{n+1}{r}\bigg( (t-\ell-r)^{(n-\gamma)} + \\ \\
\displaystyle  \rule{1cm}{0cm} t^{1-\gamma}\, \sum_{p=0}^{n-1}  \frac{(-1)^{n-p}(-\ell-r)^{n-1-p}(t-\ell-r)^p}{(n-1-p)!}\prod _{s=1}^{n-1-p} (\gamma - s) \bigg ).
\end{array}
\end{equation*}
\end{theorem}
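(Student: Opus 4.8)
The plan is to reduce the fractional integral to a finite sum of elementary power integrals and then to evaluate each of them through a first-order recursion coming from a single integration by parts, so that the factor $t^{1-\gamma}$ appears automatically. First I would insert the explicit derivative of the B-spline: differentiating $B_n=\frac{1}{n!}\Delta^{n+1}T_n$ and using $T_n'=n\,T_{n-1}$ gives $B_n'(\tau)=\frac{1}{(n-1)!}\sum_{r=0}^{n+1}(-1)^r\binom{n+1}{r}T_{n-1}(\tau-r)$. Since $T_{n-1}(\tau-r)=(\tau-r)^{n-1}$ for $\tau>r$ and vanishes otherwise, only the indices $r=0,\dots,-\ell-1$ survive on the integration interval $[0,-\ell]$ (recall $-\ell\le n$); this already accounts for both the summation range and the coefficients $(-1)^r\binom{n+1}{r}$ in the statement. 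After the shift $u=\tau-r$ each surviving term becomes $\frac{1}{(n-1)!}(-1)^r\binom{n+1}{r}\,I^{(n-1)}$, where $I^{(m)}:=\int_0^a u^m(b-u)^{-\gamma}\,du$ with $a=-\ell-r$, $b=t-\ell-r$ and, crucially, $b-a=t$.

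Next I would evaluate $I^{(m)}$. Integrating by parts once with the antiderivative $(b-u)^{1-\gamma}/(\gamma-1)$ of $(b-u)^{-\gamma}$, and then rewriting $(b-u)^{1-\gamma}=(b-u)(b-u)^{-\gamma}$ to re-express the remaining integral through $I^{(m)}$ and $I^{(m-1)}$, yields the recursion
\[
I^{(m)}=\frac{mb}{m+1-\gamma}\,I^{(m-1)}-\frac{a^m}{m+1-\gamma}\,t^{1-\gamma},\qquad I^{(0)}=\frac{b^{1-\gamma}-t^{1-\gamma}}{1-\gamma}.
\]
The boundary contribution at $u=a$ is exactly where the factor $t^{1-\gamma}=(b-a)^{1-\gamma}$ enters, and this is the mechanism that forces $t^{1-\gamma}$ (rather than the mixed powers $t^{k-\gamma}$ produced by naive repeated integration by parts) to be the common factor in the final formula.

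Then I would unroll the recursion from $m=0$ to $m=n-1$, writing $I^{(n-1)}=\bigl(\prod_{m=1}^{n-1}\rho_m\bigr)I^{(0)}+\sum_{k=1}^{n-1}\bigl(\prod_{m=k+1}^{n-1}\rho_m\bigr)\sigma_k$ with $\rho_m=\frac{mb}{m+1-\gamma}$ and $\sigma_k=-\frac{a^k}{k+1-\gamma}t^{1-\gamma}$. The $I^{(0)}$ term splits into a pure $b^{n-\gamma}$ part — which, after multiplication by $\frac{1}{\Gamma(1-\gamma)(n-1)!}$ and use of $\prod_{s=1}^{n}(s-\gamma)=\Gamma(n+1-\gamma)/\Gamma(1-\gamma)$, collapses to $b^{n-\gamma}/\Gamma(n+1-\gamma)$ as required — and a $t^{1-\gamma}b^{n-1}$ part supplying the $p=n-1$ summand. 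Each $\sigma_k$ term then contributes the summand with $p=n-1-k$, so that $p$ ranges over $0,\dots,n-1$ as in the statement.

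The main obstacle is the final reconciliation of constants: one must match the telescoped rational factor $\frac{(n-1)!}{k!\,\prod_{j=k+1}^{n}(j-\gamma)}$ against $\frac{1}{(n-1-p)!}\prod_{s=1}^{n-1-p}(\gamma-s)$, convert products via $\prod_{s=1}^{k}(\gamma-s)=(-1)^k\prod_{s=1}^{k}(s-\gamma)$, and track the overall sign $(-1)^{n-p}$ correctly under the reindexing $p=n-1-k$; everything else is routine. To keep this algebra honest I would first check the identity for $n=1$ and $n=2$ — where it reduces to $\frac{(t+1)^{1-\gamma}-t^{1-\gamma}}{\Gamma(2-\gamma)}$ and to a short two-term expression, respectively — before assembling the general case.
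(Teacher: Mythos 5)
Your proposal is correct and follows essentially the same route as the paper: expand $B_n'$ as a signed sum of truncated powers $T_{n-1}(\cdot-r)$, note that only $r=0,\dots,-\ell-1$ contribute on $[0,-\ell]$, and evaluate the resulting elementary power integrals by integration by parts. The only difference is organizational --- the paper writes down the closed-form antiderivative outright (``by a direct computation'') and evaluates it at the endpoints, whereas you generate the same antiderivative by unrolling a one-step integration-by-parts recursion; your boundary terms at $u=a$ produce exactly the common factor $t^{1-\gamma}=(b-a)^{1-\gamma}$ and the constants match the stated formula.
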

\begin{proof}
	We recall that $B_n'(t)$ writes:
	$$
	B'_n(t)=\frac{1}{n!}\Delta^{n+1} T'_n(t) = \frac{1}{(n-1)!}\sum_{r=0}^{n+1}(-1)^r\binom{n+1}{r}T_{n-1}(t-r)\,.
	$$
	Substituting the expression of $B'_n$ in the integral in the left hand side of (\ref{fracderivB_left }), we obtain
	$$
	\label{D_k0}
	\begin{array}{lcl}
	\displaystyle \frac{1}{\Gamma(1-\gamma)(n-1)!} 
	\sum_{r=0}^{n+1}(-1)^r\binom{n+1}{r}
	\int_{0}^{-\ell}\frac{T_{n-1}(\tau-r)}{(t-\ell-\tau)^{\gamma} } \, d\tau =\\ \\
	\displaystyle \frac{1}{\Gamma(1-\gamma)(n-1)!}
	\sum_{r=0}^{-\ell-1} (-1)^r\binom{n+1}{r} 
	\int_{0}^{-\ell-r}\frac{\tau^{n-1}}{(t-\ell-r-\tau)^{\gamma} } d\tau\,.
	\end{array}
	$$
	By a direct computation we get
	$$
	\begin{array}{l}
	\displaystyle
	\int_0^{-\ell}\frac{\tau^{n-1} }{(t-\ell-\tau)^{\gamma}}\,d\tau = \\ \\
	\displaystyle
	\frac{(n-1)!}{\prod_{s=1}^n(\gamma - s)}
	(t-\ell-\tau)^{(1-\gamma)} \sum_{p=0}^{n-1}\frac{(\ell-t)^{p}\tau^{n-1-p}}{(n-1-p)!}
	\prod_{s=1}^{n-1-p} (\gamma - s)\bigg|_{\tau=0}^{-\ell} 
	= \\ \\
	\displaystyle \frac{(n-1)!}{\prod_{s=1}^n(\gamma - s)} \left[(t-\ell)^{(n-\gamma)}+t^{1-\gamma}\,  \sum_{p=0}^{n-1}\frac{(-1)^{n-p}(-\ell)^{n-1-p}(t-\ell)^p}{(n-1-p)!} \prod_{s=1}^{n-1-p} (\gamma - s) \right ]
	\end{array}
	$$
	and the claim follows.
\end{proof}


\section{Multiresolution Analysis on $L_2[0,\infty)$}
\label{Sec:4}
The B-spline basis ${\cal B}_n$ generates a multiresolution analysis on the semi-infinite interval $[0,+\infty)$ \cite{Chui}. This means that  the sequence of subspaces $\{ V_j\subset L_2[0,+\infty)\}$ defined as
\begin{equation*}
V_j = {\rm clos}_{L_2[0,\infty)} \,\{\phi_{j\ell}(t)\,, \ell \ge -n \} \,, \qquad j \in \ZZ\,, \qquad t \ge 0\,,
\end{equation*}
where 
\begin{equation*} \label{phi_jk}
\phi_{j\ell}(t) := 2^{j/2} \, B_n(2^j\,t -\ell)\,,
\end{equation*}
fulfills the following properties:

\medskip
\begin{tabular}{rlrl}
	({\it i}) & $V_j \subset V_{j+1}$, $j \in \ZZ$;  &
	({\it ii}) & $\overline{\cup_{j \in \ZZ} V_j} = L^2[0,+\infty)$; \\ \\
	({\it iii})& $\bigcap_{j\in \ZZ} V_j = \{0\}$; &
	({\it iv}) & $f(t) \in V_j \leftrightarrow f(2t) \in V_{j+1}$, $j\in \ZZ$; \\ \\
	({\it v}) & there exists a & & \hskip -2.cm $L_2[0,+\infty)$-stable basis in $V_0$.
\end{tabular}
\\ \\
Thus, any function $f \in V_j$ can be represented as
\begin{equation}
f(t) = \sum_{\ell \ge -n} \, c_{j\ell} \, \phi_{j\ell}(t)\,,
\end{equation}
where $\{c_{j\ell}\}\in \ell_2(\ZZ)$.
Once again, the basis functions $\phi_{j\ell}$ with $-n \le \ell \le -1$ are the left egde functions, while for $\ell \ge 0$  $\phi_{j\ell}$ are integer translates of  $B_n(2^j\cdot)$. The computation of the fractional derivatives of $f$ requires the evaluation of the fractional derivatives of $\phi_{j\ell}$. This can be done using Theorem \ref{Th:frac_der} and the following lemma. 

\begin{lemma}
	The Caputo derivative of order $\gamma$ of the $2^j$-dilate of a function $f(t)$ is given by:
	$$
	{ D^{\gamma}_t f(2^j t)=2^{\gamma j} D_{2^j t}^{\gamma}f(2^j t)}\,, \quad k-1 < \gamma < k\,, \quad k \in \NN\,.
	$$
\end{lemma}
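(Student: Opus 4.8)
The plan is to reduce everything to a single substitution inside the Riemann--Liouville integral defining the Caputo derivative. Set $g(t):=f(2^j t)$ and unwind definition~(\ref{Cap_fracder}), so that $D_t^\gamma g(t)=\bigl({\cal J}^{(k-\gamma)} g^{(k)}\bigr)(t)$. The first routine step is the integer-order chain rule, which gives $g^{(k)}(t)=2^{jk}\,f^{(k)}(2^j t)$; feeding this into~(\ref{RLint}) yields
\begin{equation*}
D_t^\gamma g(t)=\frac{2^{jk}}{\Gamma(k-\gamma)}\int_0^t f^{(k)}(2^j\tau)\,(t-\tau)^{k-\gamma-1}\,d\tau\,.
\end{equation*}

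Second, I would perform the change of variable $\sigma=2^j\tau$, under which $d\tau=2^{-j}\,d\sigma$, the limits become $0$ and $2^j t$, and $t-\tau=2^{-j}(2^j t-\sigma)$. Substituting and pulling the constant out of the integral, the prefactor collects to
\begin{equation*}
2^{jk}\cdot 2^{-j(k-\gamma-1)}\cdot 2^{-j}=2^{\gamma j}\,,
\end{equation*}
so that
\begin{equation*}
D_t^\gamma g(t)=2^{\gamma j}\,\frac{1}{\Gamma(k-\gamma)}\int_0^{2^j t} f^{(k)}(\sigma)\,(2^j t-\sigma)^{k-\gamma-1}\,d\sigma\,.
\end{equation*}

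Finally I would recognize the remaining integral as the Caputo derivative of $f$ taken with respect to its own argument and evaluated at the point $2^j t$; that is, it equals $D_{s}^\gamma f(s)\big|_{s=2^j t}$, which is exactly the quantity denoted $D_{2^j t}^\gamma f(2^j t)$ in the statement. This identifies $D_t^\gamma f(2^j t)=2^{\gamma j}\,D_{2^j t}^\gamma f(2^j t)$ and closes the argument.

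The computation itself is elementary; the only point that requires care --- and the place where a sign or exponent slip is easy --- is the bookkeeping of the powers of $2$ emerging from three sources (the chain-rule factor $2^{jk}$, the kernel factor $2^{-j(k-\gamma-1)}$ from $(t-\tau)^{k-\gamma-1}$, and the Jacobian $2^{-j}$), which must cancel to leave precisely $2^{\gamma j}$. A secondary subtlety is purely notational: one must read $D_{2^j t}^\gamma f(2^j t)$ as \emph{apply the Caputo operator to $f$, then evaluate at $2^j t$} rather than as differentiating the composed function, since the content of the lemma is exactly the discrepancy between these two readings.
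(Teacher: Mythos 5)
Your proposal is correct and follows essentially the same route as the paper's own proof: differentiate the dilate by the chain rule, insert into the Riemann--Liouville integral, substitute $\sigma = 2^j\tau$, and collect the powers of $2$ to get $2^{\gamma j}$. The power bookkeeping ($2^{jk}\cdot 2^{-j(k-\gamma-1)}\cdot 2^{-j}=2^{\gamma j}$) checks out and matches the paper's computation.
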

\begin{proof}
	Let $F(t)=f(2^j t)$, then  $F^{(m)}(t)=2^{jm}f^{(m)}(2^j t)$, $m \in \NN$. By definition
	$$ \begin{array}{lcl}
	D^{\gamma}_tF(t)&=&\displaystyle \frac{1}{\Gamma (k-\gamma)}\int_0^{t}\frac{F^{(k)}(\tau)}{( t-\tau)^{\gamma - k +1}} d\tau \\ \\
	&=&\displaystyle \frac{1}{\Gamma (k-\gamma)}\, 2^{jk}\int_0^{t}\frac{ f^{(k)}(2^j\tau)}{( t-\tau)^{\gamma - k +1}} d\tau
	\end{array}
	$$
	By the change of variables  $2^j \tau \rightarrow \tau$, we get
	$$ \begin{array}{lcl}
	D^{\gamma}_t F(t)&=&\displaystyle\frac{1}{\Gamma (k-\gamma)}2^{j(k-1)}\int_0^{2^j t}\frac{ f^{(k)}( \tau)}{( t-2^{-j} \tau)^{(\gamma - k +1)}} \,d \tau\\ \\
	&=&\displaystyle\frac{1}{\Gamma (k-\gamma)} \frac{2^{j(k-1)}}{2^{-j(\gamma-k+1)}}\int_0^{2^j t}\frac{ f^{(k)}( \tau)}{(2^{j} t- \tau)^{(\gamma - k +1)}} 
	d \tau 
	\end{array}
	$$
	and the claim follows.
\end{proof}
In the next section we will describe how to apply the {\it collocation method} introduced in \cite{PePi 2018.a} to numerically solve the differential problem (\ref{fracdynsystem}) in the refinable spline spaces.



\section{\bf The fractional collocation method}
\label{Sec:5}
\noindent
We look for an approximating vector function 
\begin{equation} \label{xj}
X_j(t) = \sum_{\ell\ge -n} \, C_{j\ell} \, \phi_{j\ell}(t) \in V_j\,, \quad C_{j\ell} \in \RR^m,
\end{equation}
that solves the differential problem (\ref{fracdynsystem}) on a set of {\em  collocation points}.
We choose as collocation points the dyadic nodes in which $\phi_{jk}$ can be efficiently evaluated through well-known recursive algorithms \cite{Ma99}.
\\
Let ${\cal I} = [0,T]$ be a finite interval. Without loss of generality we assume $T\in \NN$. Since $\phi_{jk}$ has compact support, for $t \in {\cal I}$ the sum in (\ref{xj}) reduces to a finite sum:
\begin{equation} \label{xjI}
X_j(t) = \sum_{\ell= -n}^{2^jT-1} \, C_{j\ell} \, \phi_{j\ell}(t), \quad t \in {\cal I}.
\end{equation}
Let us denote by $\{t_p =  p/2^s, 0\le p \le 2^s T\}$ the dyadic collocation points in the interval $\cal I$.
Substituting (\ref{xjI}) in (\ref{fracdynsystem}) evaluated on the collocation points gives
\begin{equation} \label{collsyst}
\left \{
\begin{array}{ll}
D^{\gamma}_t X_j(t_p)=A \, X_j(t_p), &  1\le p \le 2^s T, \\ \\
X_j(0)=X_0.
\end{array}
\right.
\end{equation}
This is a linear algebraic system that can be written in matrix form as follows 
\begin{equation} \label{matrix_coll}
\left \{
\begin{array}{ll}
(I_m\otimes G_{js}-A\otimes B_{js} )\, \Gamma_{js} = 0\,,\\ \\
I_m\otimes\Phi_{js}(0) \, \Gamma_{js}=X_{0}\,,
\end{array}
\right.   
\end{equation}
where  $I_m$ is the identity matrix of order $m$,
$$
G_{js}= \bigl [D^{\gamma}_t \, \phi_{j\ell}(t_p), 0< p\le 2^sT, -n \le \ell \le 2^jT-1 \bigr]
$$
and
$$ 
B_{js}= \bigl [\phi_{j\ell}(t_p), 0< p\le 2^sT, -n \le \ell \le 2^jT-1 \bigr] 
$$
are the collocation matrices of the refinable basis, 
$$
\Phi_{js}(0)=\bigl[\phi_{j\ell}(0),-n \le \ell \le 2^jT-1 \bigr]\,,
$$
and
$$
\Gamma_{js}=\bigl[C_{j\ell},-n \le \ell \le 2^jT-1 \bigr]^T
$$
is the unknown vector. We notice that the linear system (\ref{matrix_coll}) has $m(2^sT+1)$ equations and $m(2^jT+n)$ unknowns. To guarantee the existence of a unique solution we set $2^sT+1 \ge 2^jT+n$.  

\begin{theorem}
	For $2^sT+1 \ge 2^jT+n$ the linear system (\ref{matrix_coll}) has a unique solution. 
\end{theorem}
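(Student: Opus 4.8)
The plan is to recast the two blocks of (\ref{matrix_coll}) as a single linear map and to prove that this map is injective while its range contains the right-hand side. Since the row-count condition $2^sT+1 \ge 2^jT+n$ guarantees at least as many equations as unknowns, injectivity (full column rank) is exactly what yields uniqueness, and in the square case $2^sT+1 = 2^jT+n$ it simultaneously gives existence. Concretely, I would stack the two coefficient matrices into
\[
M = \begin{pmatrix} I_m\otimes G_{js}-A\otimes B_{js} \\ I_m\otimes \Phi_{js}(0) \end{pmatrix},
\]
and reduce the statement to showing that $M\,\Gamma_{js}=0$ forces $\Gamma_{js}=0$.

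First I would exploit the Kronecker structure to decouple the system along the spectrum of $A$. Writing $A=PDP^{-1}$ with $D=\mathrm{diag}(\lambda_1,\dots,\lambda_m)$ (passing to the Jordan form if $A$ is not diagonalizable) and substituting $\Gamma_{js}=(P\otimes I)\,\widetilde\Gamma$, the identity $I_m\otimes G_{js}-A\otimes B_{js}=(P\otimes I)\,(I_m\otimes G_{js}-D\otimes B_{js})\,(P^{-1}\otimes I)$ turns the homogeneous first block into the block-diagonal conditions $(G_{js}-\lambda_i B_{js})\,\widetilde\gamma_i=0$, while invertibility of $P$ reduces the homogeneous initial block $(P\otimes\Phi_{js}(0))\,\widetilde\Gamma=0$ to $\Phi_{js}(0)\,\widetilde\gamma_i=0$ for each $i$. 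This collapses the vector problem to $m$ scalar collocation problems, one per eigenvalue.

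The core step is then to prove that, for every $\lambda\in\CC$, the only $\widetilde\gamma$ with $(G_{js}-\lambda B_{js})\,\widetilde\gamma=0$ and $\Phi_{js}(0)\,\widetilde\gamma=0$ is $\widetilde\gamma=0$; equivalently, the only $x_j=\sum_\ell c_\ell\,\phi_{j\ell}\in V_j$ that collocates the scalar homogeneous equation $D^\gamma_t x_j(t_p)=\lambda\,x_j(t_p)$ at every node $t_p$ and satisfies $x_j(0)=0$ is $x_j\equiv 0$. I would argue this from the refinable B-spline machinery of Sections~\ref{Sec:3}--\ref{Sec:4}: the matrix $B_{js}$ of B-spline values at the dyadic nodes, together with the fractional-derivative matrix $G_{js}$ built from (\ref{fracderivBint_0})--(\ref{fracderivB_left }), inherits the full-rank (Schoenberg--Whitney / total-positivity) properties of the spline basis evaluated at the collocation nodes, so that no nonzero spline can meet all the discrete constraints. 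This is precisely the well-posedness established for the scalar collocation scheme in \cite{PePi 2018.a}, here applied to the shifted matrix $G_{js}-\lambda B_{js}$.

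The main obstacle I expect is exactly this scalar full-rank claim: one must rule out \emph{spurious} discrete solutions of $G_{js}\widetilde\gamma=\lambda\,B_{js}\widetilde\gamma$ that are not seen by the continuous uniqueness coming from $x(t)=E_{\gamma,1}(\lambda\,t^\gamma)\,x(0)$, and this demands the quantitative positioning of the collocation nodes relative to the B-spline supports rather than a soft dimension count. A secondary technical point is a non-diagonalizable $A$, where the decoupling must be carried out on Jordan blocks and the scalar argument extended to generalized eigenvectors. Finally, when $2^sT+1>2^jT+n$ strictly the existence half needs a separate consistency check, since an overdetermined system need not be solvable; there one must verify that the surplus collocation rows already lie in the row space determined by a minimal square selection of nodes.
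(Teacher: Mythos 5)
Your overall strategy is quite different from the paper's. The paper does not touch the Kronecker structure or the spectrum of $A$ at all: it rewrites (\ref{fracdynsystem}) as the weakly singular Volterra system $Z = A\,{\cal J}^{(\gamma)}Z + X_0$ in the new unknown $Z = D^\gamma_t X$, invokes the equivalence of the two formulations \cite{KPT16} and the unique solvability of the integral equation (hence invertibility of the associated operator) \cite{Va93}, and then concludes unique solvability of the discrete system by appealing to the general theory of discrete least-squares collocation for such operator equations \cite{As78}. Your plan --- stack the two blocks into one matrix $M$, decouple along the Jordan form of $A$ via $(P\otimes I)(I_m\otimes G_{js}-D\otimes B_{js})(P^{-1}\otimes I)$, and reduce to the $m$ scalar problems $(G_{js}-\lambda_i B_{js})\,\widetilde\gamma_i=0$, $\Phi_{js}(0)\,\widetilde\gamma_i=0$ --- is algebraically correct as a reduction and is in principle more concrete than the paper's soft argument.

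However, the reduction is essentially all you deliver: the core claim, that $(G_{js}-\lambda B_{js})\,\widetilde\gamma=0$ together with $\Phi_{js}(0)\,\widetilde\gamma=0$ forces $\widetilde\gamma=0$, is asserted rather than proved, and you yourself flag it as the main obstacle. The Schoenberg--Whitney / total-positivity machinery governs nonsingularity of the value matrix $B_{js}$ alone; it says nothing about $G_{js}$, whose entries are Caputo derivatives of B-splines (including the edge functions of Theorem~\ref{Th:frac_der}) and are fractional splines rather than splines, and nothing about the combination $G_{js}-\lambda B_{js}$ for an arbitrary complex $\lambda$. Ruling out spurious null vectors of that matrix is precisely the content of the theorem, so as written the argument has a genuine gap at its center; to close it you would need either the quantitative node-placement analysis you allude to, or the route the paper takes, namely transferring invertibility from the continuous integral operator to its collocation discretization. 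Your final observation is, on the other hand, a fair criticism of the statement itself: for $2^sT+1>2^jT+n$ strictly, full column rank yields uniqueness but not existence of an exact solution, and indeed the paper retreats to a least-squares formulation immediately after the theorem.
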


\begin{proof}
	Using definitions (\ref{Cap_fracder_vec})-(\ref{RLint}) the differential problem (\ref{fracdynsystem}) can be written as a system of integral equations:
	\begin{equation} \label{int_eq}
	Z(t) = A \, J^{(\gamma)} \, Z(t) + X_0,
	\end{equation}
	where $Z(t) = [z_1(t)=D^\gamma_t x_1(t),z_2(t)=D^\gamma_t x_2(t),\ldots,z_m(t)=D^\gamma_t x_m(t)]^T$ and $J^{(\gamma)} \, Z = [J^{(\gamma)} \, z_1(t),\ldots,J^{(\gamma)}z_m(t)]^T$.
	The system above is equivalent to the differential problem (\ref{fracdynsystem}) (cf. \cite{KPT16}) and has a unique solution \cite{Va93} so that the associated integral operator is invertible. Thus, the linear system (\ref{matrix_coll}) has a unique solution, too (cf. \cite{As78}).
\end{proof}
\noindent
Finally, we proof the convergence of the collocation method (\ref{collsyst}).

\begin{theorem}
The collocation method is convergent, i.e. 
$$
\lim_{j \to \infty}\| X(t) - X_j(t)\|_\infty = 0,
$$
where $\|X(t)\|_\infty = \max_{1\le i \le m} \left (\max_{t \in [0,T]}|x_i(t)| \right)$. Moreover, the approximation order is $\gamma$, i.e.
$$
\| X(t) - X_j(t)\|_\infty \le \kappa \, 2^{-j\gamma}, \qquad 0 < \gamma <1,
$$
where $\kappa$ is a constant independent from $j$.
\end{theorem}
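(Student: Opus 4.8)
The plan is to recast the collocation scheme as a projection method for an equivalent second-kind Volterra integral equation, and then to combine the classical stability/convergence theory for such methods with a sharp spline-approximation estimate that reflects the limited regularity of the exact solution at $t=0$.

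First I would pass to the integral formulation. Applying ${\cal J}^{(\gamma)}$ to both sides of (\ref{fracdynsystem}) and using the identity $({\cal J}^{(\gamma)} D_t^\gamma X)(t) = X(t)-X_0$, valid for $0<\gamma<1$, one obtains the second-kind equation $(I - {\cal K})X = X_0$ with ${\cal K} := A\,{\cal J}^{(\gamma)}$, regarded as an operator on $C([0,T])^m$ endowed with $\|\cdot\|_\infty$. The operator ${\cal K}$ is compact, since the weakly singular kernel of ${\cal J}^{(\gamma)}$ maps bounded sets into equicontinuous, hence relatively compact, ones; and by the invertibility of the integral operator established in the preceding theorem, $I - {\cal K}$ has a bounded inverse. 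This is the well-posed operator equation whose solution is the exact $X(t) = E_{\gamma,1}(t^\gamma,A)X_0$.

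Next I would identify the collocation solution $X_j$ of (\ref{collsyst}) with the projection solution of $(I - P_j {\cal K})X_j = P_j X_0$, where $P_j : C([0,T])^m \to V_j^m$ is the interpolatory projector associated with the dyadic collocation nodes, the correspondence between the differential and integral forms being the one exploited above (cf. \cite{KPT16}). The convergence assertion then follows from the standard perturbation argument (cf. \cite{As78}): because $P_j \to I$ pointwise on continuous functions, which is exactly the density property (ii) of the multiresolution analysis together with the stability of the B-spline basis (property (v)), and because ${\cal K}$ is compact, one has $\|(I-P_j){\cal K}\|_\infty \to 0$. The Banach lemma then gives, for $j$ large, the invertibility of $I - P_j{\cal K}$ with uniformly bounded inverse, hence existence and uniqueness of $X_j$ and the quasi-optimality estimate
$$\|X - X_j\|_\infty \le C\,\|(I-P_j)X\|_\infty,$$
with $C$ independent of $j$. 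Since $\|(I-P_j)X\|_\infty \to 0$ for the continuous function $X$, the first claim follows.

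Finally I would quantify the approximation error to obtain the rate. From the Mittag-Leffler representation one has the expansion
$$X(t) = X_0 + \frac{t^\gamma}{\Gamma(\gamma+1)}\,A X_0 + \mathcal{O}(t^{2\gamma}),$$
so $X$ is smooth on every interval $[\delta,T]$ but carries a $t^\gamma$ singularity at the origin, where it is only H\"older-$\gamma$ continuous. Splitting $[0,T]$ into the coarsest cell $[0,2^{-j}]$ and the remainder, a local Jackson-type estimate bounds the interpolation error on the first cell by $\kappa\,\omega(X,2^{-j}) \sim 2^{-j\gamma}$, while on the $k$-th subsequent cell the error behaves like $2^{-j(n+1)}(k\,2^{-j})^{\gamma-n-1}$, whose maximum over the cells is again of order $2^{-j\gamma}$. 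Hence $\|(I-P_j)X\|_\infty \le \kappa\,2^{-j\gamma}$, and substitution into the quasi-optimality bound yields the claimed order. The main obstacle is precisely this last step: establishing rigorously that the interpolation error for the singular term $t^\gamma$ is \emph{exactly} of order $2^{-j\gamma}$, which requires combining the H\"older-$\gamma$ regularity of $X$ near the origin, read off from the Mittag-Leffler expansion, with a cellwise spline-approximation estimate; by contrast, once $P_j\to I$ pointwise is in hand, the convergence part is routine.
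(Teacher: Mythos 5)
Your proposal follows the same overall skeleton as the paper's proof --- reformulate the problem as a second-kind weakly singular Volterra equation, invoke the projection/collocation theory of \cite{As78,KPT16} to reduce the error to a best-approximation error in the spline space, then estimate that error --- but it is substantially more complete, and in the one step that actually determines the rate it is genuinely different and, in my view, more correct. The paper works with the substituted unknown $Z=D^\gamma_t X$, asserts without argument that $\|X-X_j\|_\infty$ equals $\|Z-Z_j\|_\infty$, and attributes the order $\gamma$ to spline approximation of a ``sufficiently smooth'' $Z$; that last phrase is problematic, since a genuinely smooth $Z$ would yield order $n+1$, not $\gamma$. You instead keep the equation in $X$, supply the actual compact-perturbation machinery (compactness of $A\,{\cal J}^{(\gamma)}$, pointwise convergence $P_j\to I$, Banach lemma, quasi-optimality with a $j$-independent constant), and --- crucially --- derive the exponent $\gamma$ from the correct source: the $t^\gamma$ term in the Mittag--Leffler expansion, which makes $X$ only H\"older-$\gamma$ at $t=0$, combined with a cellwise Jackson estimate whose maximum over the dyadic cells is $O(2^{-j\gamma})$. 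That is the right explanation of why the order saturates at $\gamma$ independently of the spline degree $n$, and it is the content the paper's citation of \cite{dB07} leaves implicit. The one gap you share with the paper is the identification of the scheme (\ref{collsyst}), which collocates the \emph{differential} equation with $X_j$ a spline, with the projection scheme $(I-P_j{\cal K})X_j=P_jX_0$ for the \emph{integral} equation: evaluation at collocation points does not commute with ${\cal J}^{(\gamma)}$, so this correspondence needs an explicit argument (or a precise appeal to \cite{KPT16}) rather than being taken for granted; your locality claim for the interpolatory projector $P_j$ in the max norm also deserves a word, since $P_j$ is a global spline interpolant. Apart from these two points, which the paper itself does not resolve, your argument is sound and fills in the essential analytic content the published proof omits.
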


\begin{proof}
	Since the collocation method can be used also to approximate the solution to the system (\ref{int_eq}), the equivalence implies that the approximation error $\| X - X_j\|_\infty$ is the same as the approximation error $\|Z-Z_j\|_\infty$ \cite{As78,KPT16}. Now, $Z_j$ is a projection operator in the spline space so that the convergence is guarantee with approximation order at least $\gamma$ in the case when $Z$ is sufficiently smooth \cite{dB07}.
\end{proof}

We notice that since the equality $2^sT+1=2^jT+n$ can be satisfied just in a few special cases, in practice we choose $s$ and $j$ so that $2^sT+1>2^jT+n$. Thus, the system (\ref{matrix_coll}) results is an overdetermined linear system that can be solved in the least squares sense.

\section{Numerical results}
\label{Sec:6}
In this section we use the proposed method to solve some test problems. 
In the tests we used the splines of degree $n=3$ and $n=4$ as approximating functions and set $s=j+1$. The functions of the cubic B-spline basis and their fractional derivatives are shown in Figures~\ref{Fig:Basis1}-\ref{Fig:Basis2}. The ordinary first derivative is also displayed. 
\\
To check the accuracy of the approximations obtained by the proposed method, we evaluated the componentwise $L_\infty$-norm of the error ${\cal E}_j(t)=X(t)-X_j(t)$, i.e.
$$
\|{\cal E}_{i,j}(t)\|_\infty = \max_{t \in [0,T]}|x_i(t)-x_{i,j}(t)|\,, \qquad 1 \le i \le m\,.
$$
Moreover, we evaluated the numerical approximation order $\rho_{\gamma,n}(j)$ defined as
$$
\rho_{\gamma,n}(j) = \log \left ( \frac {\|{\cal E}_{i,j}(t)\|_\infty}{\|{\cal E}_{i,j+1}(t)\|_\infty} \right) \frac 1{\log(2)} \,.
$$
We notice that the matrix Mittag-Leffler function appearing in the analytical solution was evaluated using the procedure proposed in \cite{GaPo 13}.

\begin{figure}[t]
	\hskip .8cm
	\begin{tabular}{cc}
		\includegraphics[width=6cm]{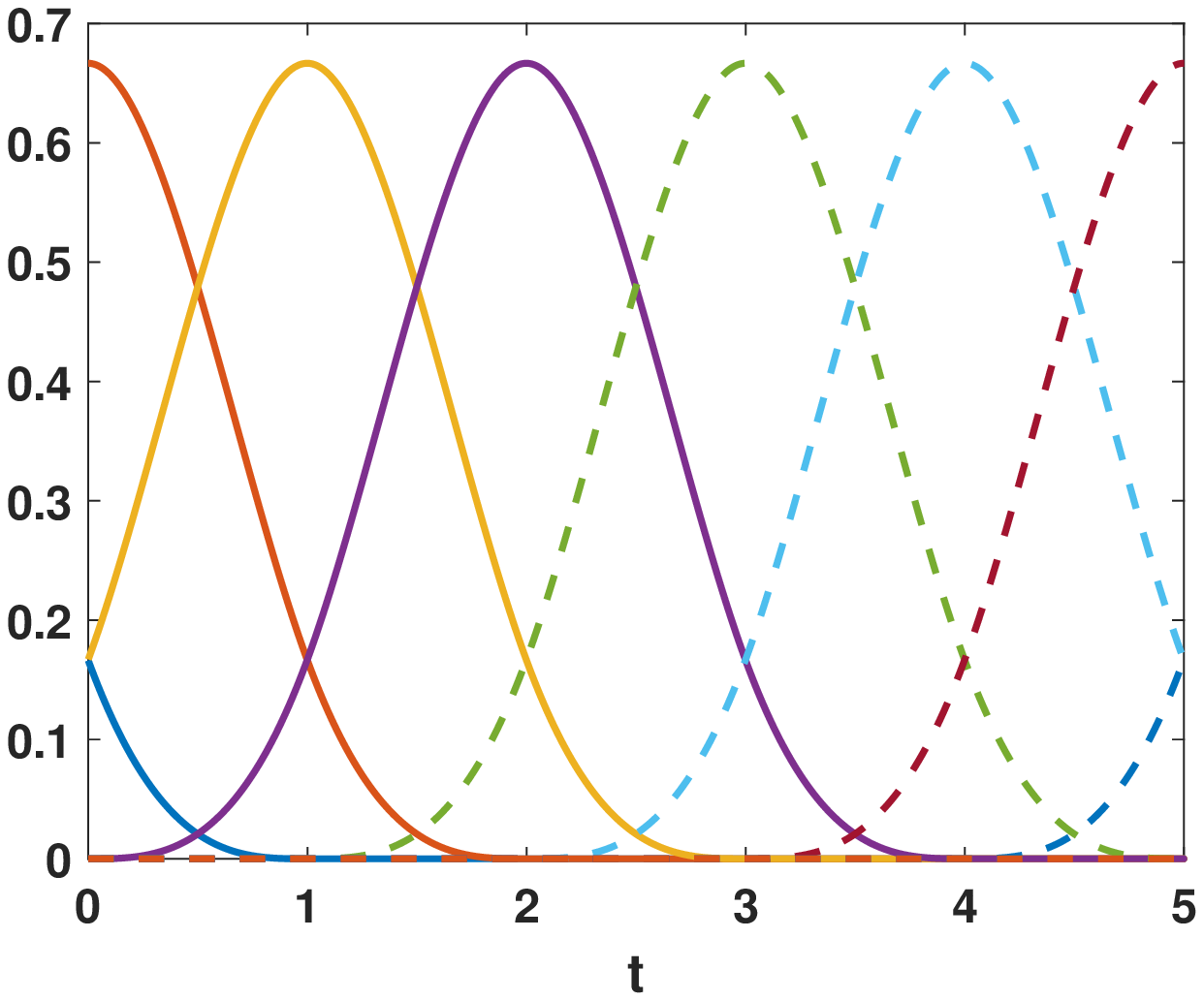}
		& 
		\includegraphics[width=6cm]{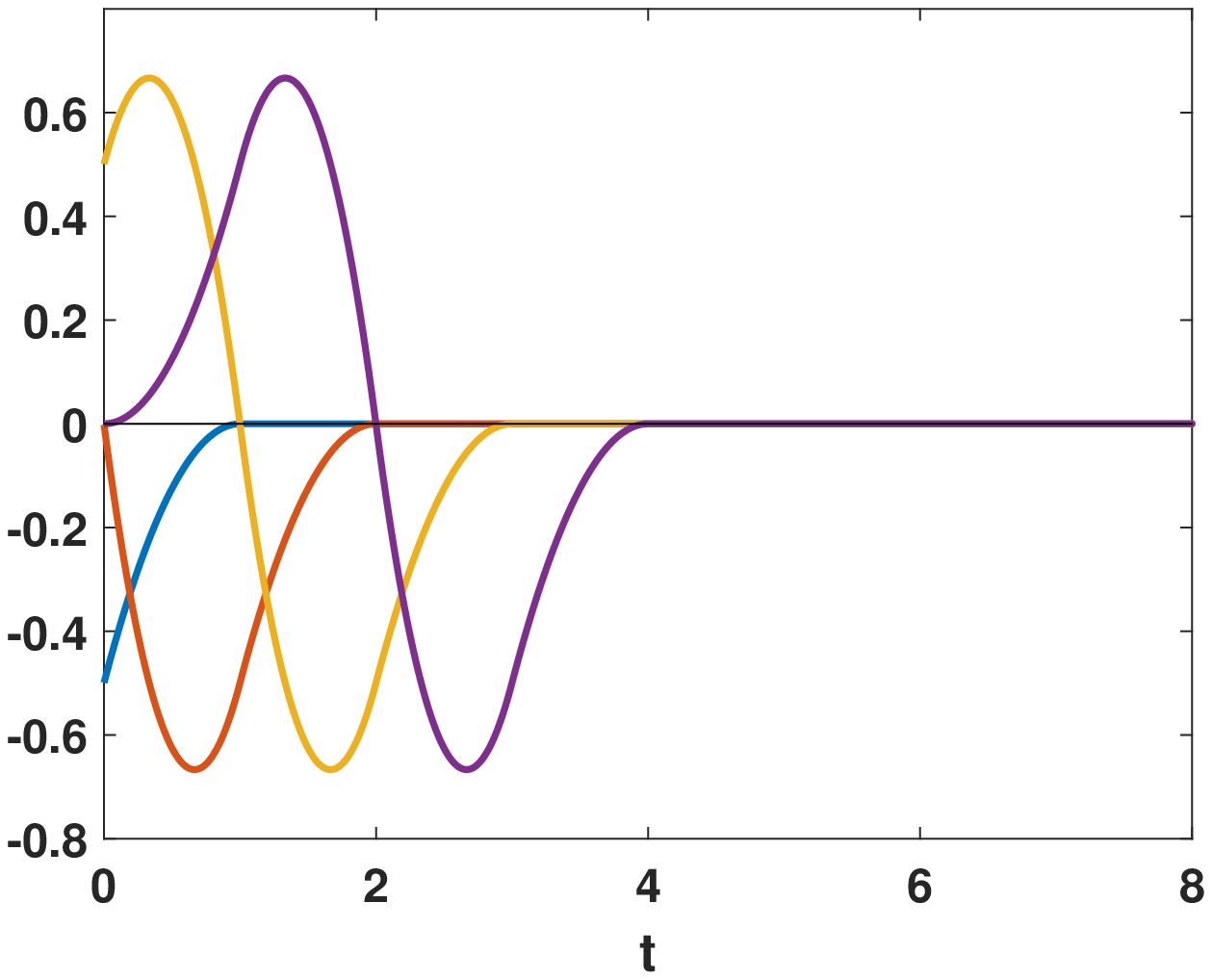}
	\end{tabular}
	\caption{The cubic B-spline basis (left panel) and the ordinary first derivative of the first four basis functions (right panel). The three boundary functions and the first interior function are displayed as solid lines.}
	\label{Fig:Basis1}
\end{figure}
\begin{figure}[h]
	\hskip .8cm
	\begin{tabular}{cc}
		\includegraphics[width=6cm]{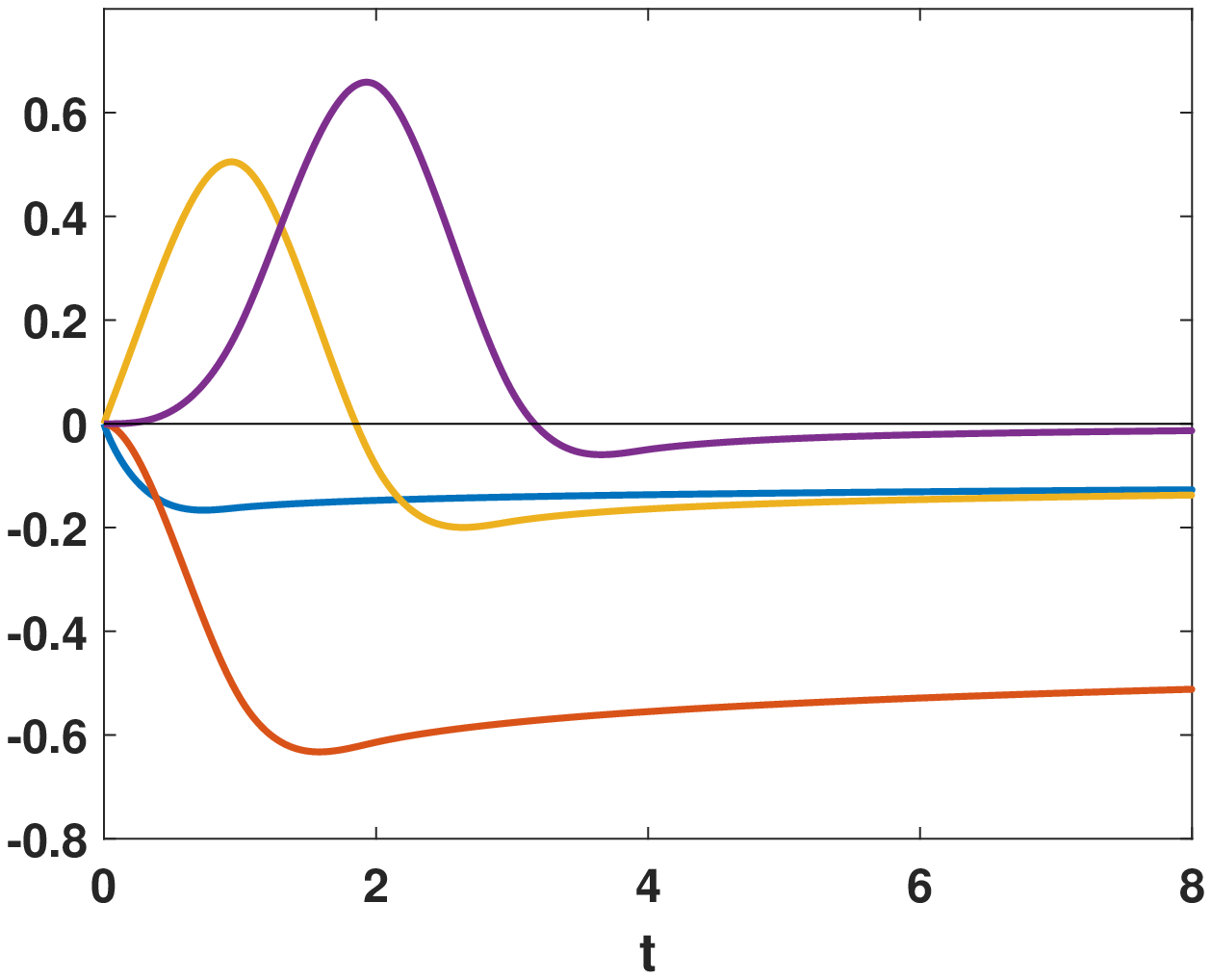}
		& 
		\includegraphics[width=6cm]{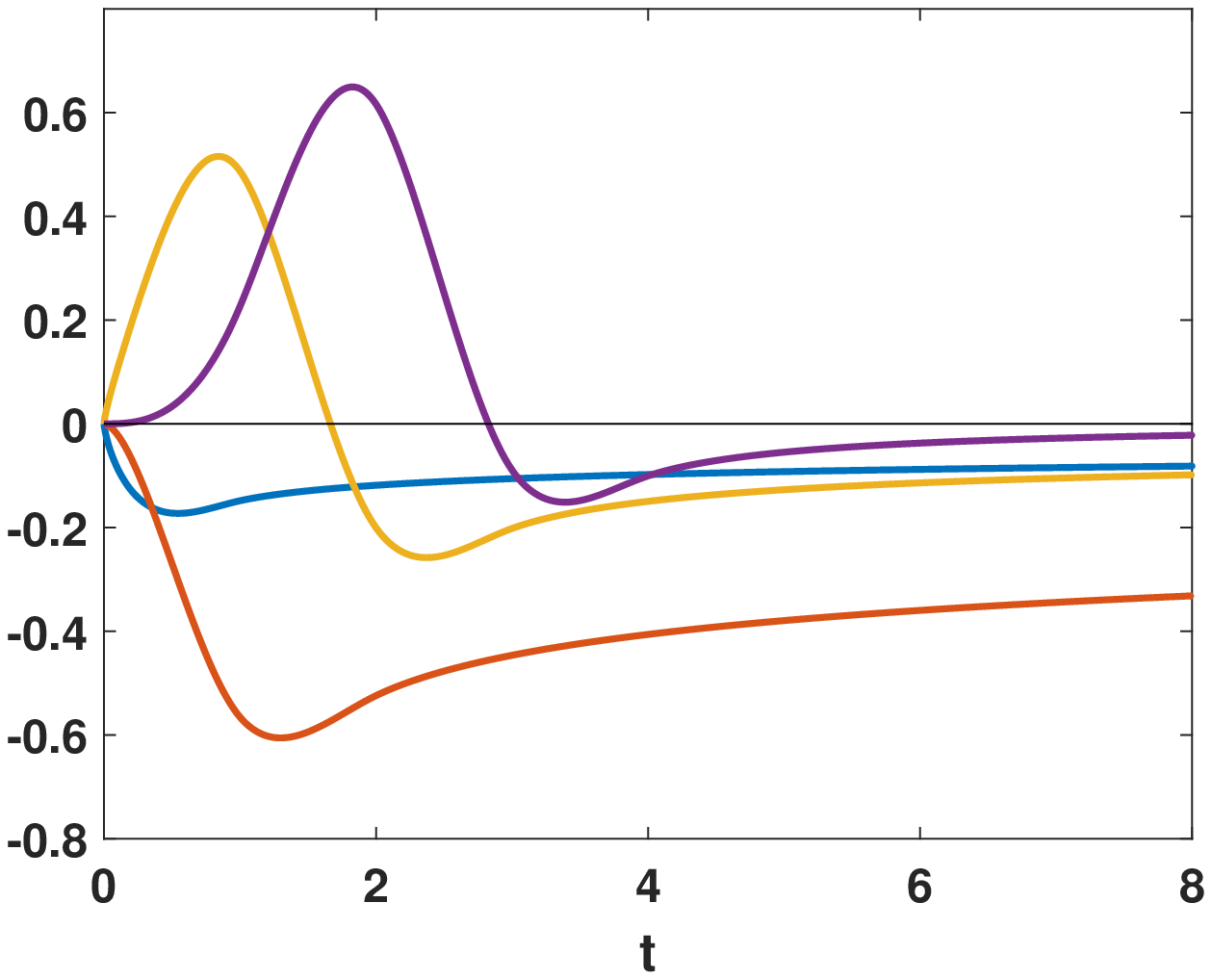}
		\\
		\includegraphics[width=6cm]{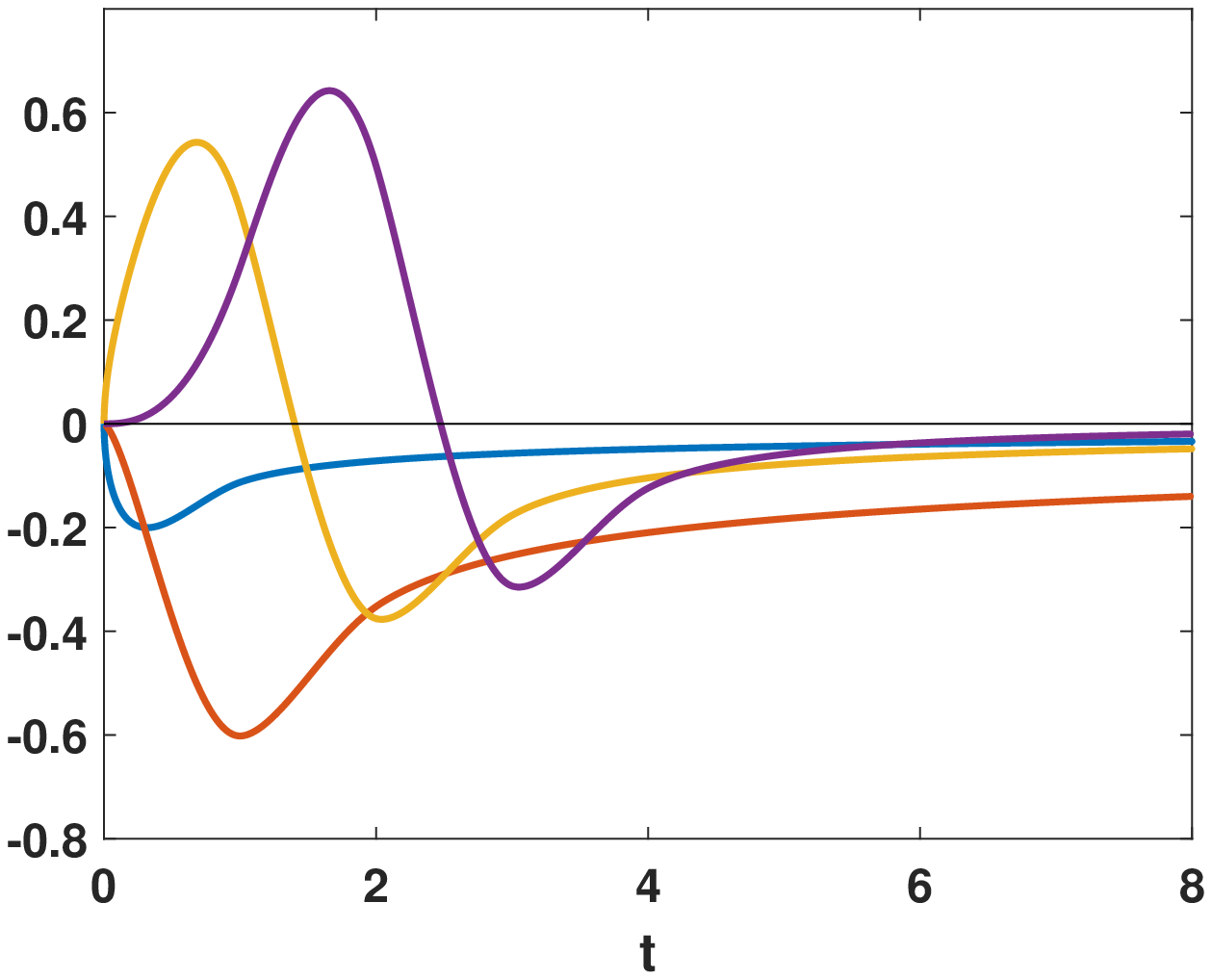}
		& 
		\includegraphics[width=6cm]{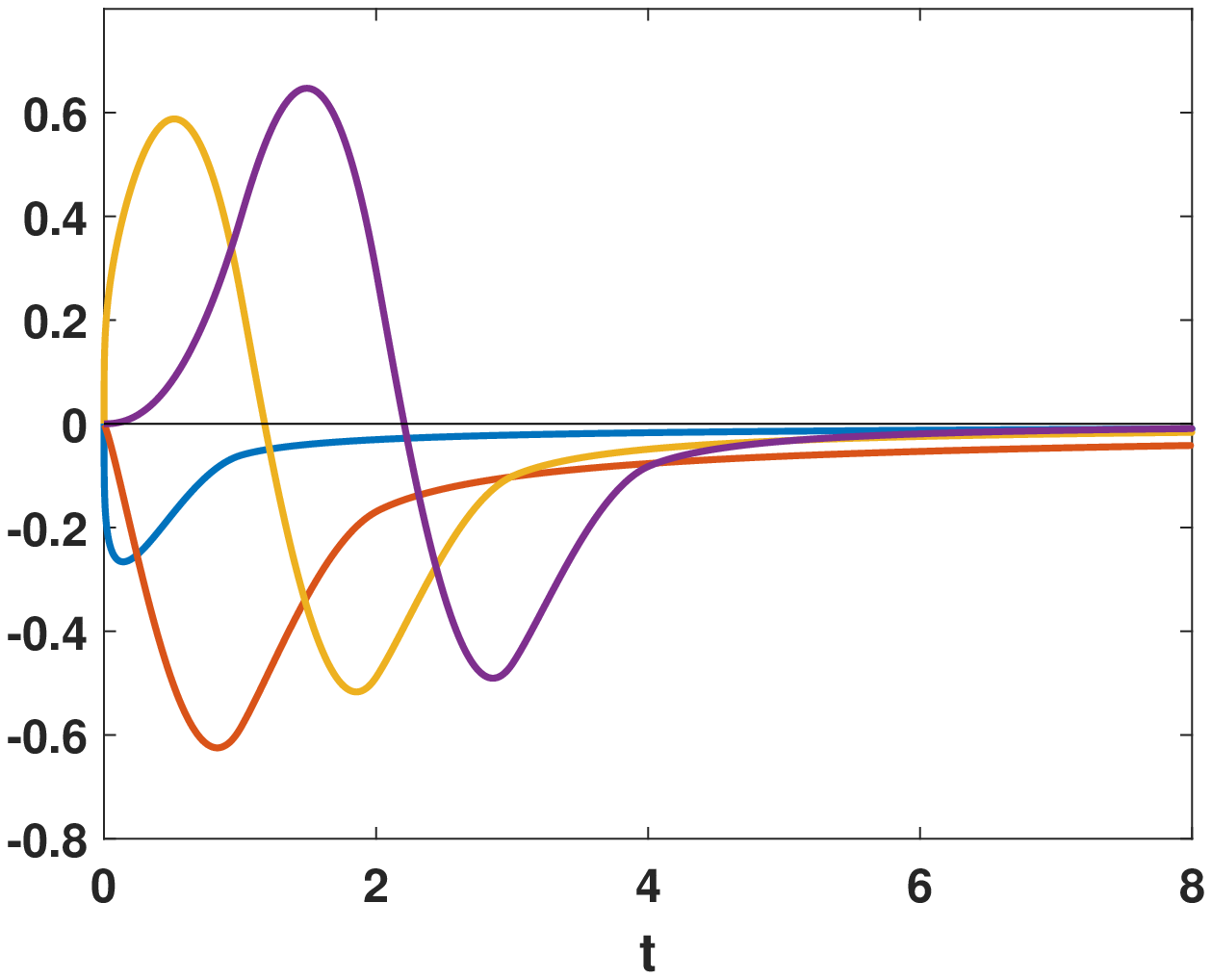}
	\end{tabular}
	\caption{The fractional derivative of the boundary functions and of the first interior function for $\gamma=0.10$ (left top panel), 0.25 (right top panel), 0.50 (left bottom panel), 0.75 (right bottom panel).}
	\label{Fig:Basis2}
\end{figure}

\subsection{Example 1}
First of all, we tested the accuracy of the collocation method by solving the following simple fractional differential equation (cf. \cite[pg. 137]{Diet}):
\begin{equation} \label{Num1}
 \left \{ \begin{array}{ll}
\displaystyle D^{\gamma} x(t)=- x(t)+t^2+2\,\frac{t^{2-\gamma}}{\Gamma(3-\gamma)}, \qquad t>0, \quad 0<\gamma<1, \\ 
x(0)= 0.     
\end{array} \right.  
\end{equation}
whose exact solution is $x(t)=t^2$. In this case the cubic spline approximation is exact.  
We numerically solve Equation~(\ref{Num1}) in the interval ${\cal I} = [0,1]$ for $\gamma=0.10$, 0.25, 0.50, 0.75. The table below lists the $L_\infty$-norm of the error ${\cal E}_j(t) = x(t)-x_j(t)$ obtained by the collocation method when $j=7$:
\medskip
\begin{center}
\begin{tabular}{c|c}
	$\gamma$ & $\|x-x_j\|_\infty$\\
	\hline
	0.10 &2.15e-16\\
	0.25 &3.16e-16\\
	0.50 &3.77e-16\\
	0.75 &6.42e-16
\end{tabular}
\end{center}
\medskip
As expected, the error is in the order of the machine precision. 

\subsection{Example 2}
In the second test we solved the fractional dynamical system 
\begin{equation} \label{Num2}
\left \{ \begin{array}{ll}
D^{\gamma} x(t)=-\frac32 x(t) +\frac12  y(t), \\ 
& \qquad t>0\,, \qquad 0 < \gamma < 1\,.\\
D^{\gamma} y(t)= \frac12 x(t)-\frac32y(t), \\ \\
x(0)=1\,, \quad y(0)= 2\,.
\end{array} \right.  
\end{equation}
The exact solution is \cite[\S7.1]{Diet}
$$
\begin{array}{l} 
x(t) = \frac32 \, E_{\gamma}(-t^\gamma) -\frac12 E_{\gamma}(-2t^\gamma)\,, \\ \\
y(t) = \frac32 \, E_{\gamma}(-t^\gamma) + \frac12\,E_{\gamma}(-2t^\gamma)\,,
\end{array} 
$$
where 
$$
E_\gamma(t) = \sum_{k\ge0} \frac{t^k}{\Gamma(\gamma k +1)}\,, 
$$
is the one-parameter Mittag-Leffler function.
We notice that the matrix 
$$A = \frac12\left [ \begin{array}{cc}
-3 & 1 \\
1 & -3
\end{array} \right]$$ 
associated with the dynamical system (\ref{Num2}) has negative eigenvalues, so that the stability of the dynamical system is guaranteed \cite{KR15}.\\
We solved the differential problem (\ref{Num2}) by the collocation method described in Section~\ref{Sec:5} for $\gamma=0.10$, 0.25, 0.50, 0.75, and for different values of $j$.  
In Figures~\ref{Fig:ex2gamma0p1}-\ref{Fig:ex2gamma0p75} the numerical solution and the approximation error are displayed in the case of the cubic spline approximation and for $j=8$. The numerical solution and the error obtained when solving the classical problem with integer first derivative are displayed in Figure~\ref{Fig:ex2gamma1}. The plots show that the proposed method gives a good accuracy that increases as $\gamma$ increases, i.e. as the smoothness of the analytical solution increases.
In Figure~\ref{Fig:ordine_gamma} the numerical approximation order $\rho_{\gamma,n}(j)$ is displayed as a function of $j$ for different values of $\gamma$ and $n=3$ and $n=4$. The plots show that the numerical approximation order is in accordance with the theoretical one. Moreover, the error is lower for the spline of degree 4.
Finally, in Figure~\ref{Fig:ordine_1} the numerical convergence order $\rho_{\gamma,n}(j)$ for $n=3$ and $n=4$ is displayed in the case of ordinary first derivative. We observe that in this case the theoretical convergence order is $n+1$ (cf. \cite{dB07}).
 
\begin{figure}[ht]
	\centering
	\includegraphics[width=12cm]{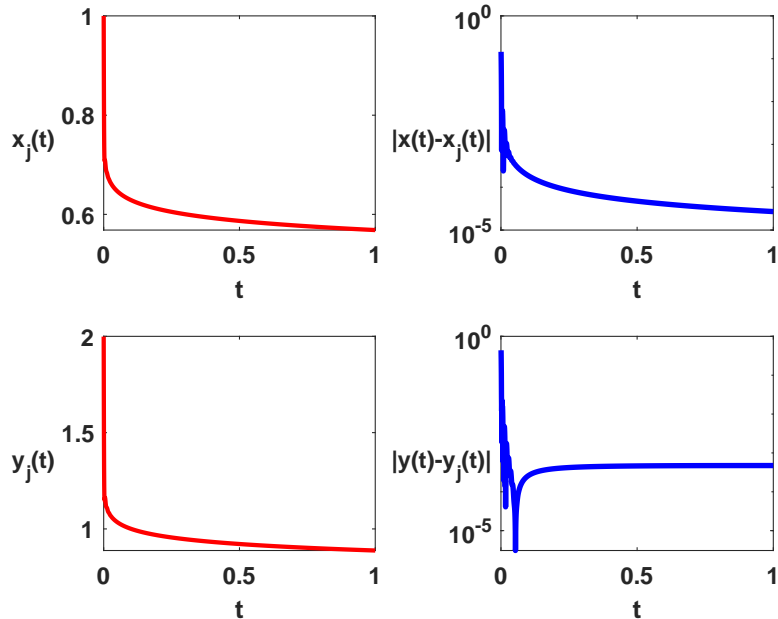}
	\caption{The numerical solutions  $x_{j}, \,y_{j}$, for $j=8$ (left panels) and the approximation error (right panels) obtained with the cubic spline when $\gamma=0.10$.}
	\label{Fig:ex2gamma0p1}
\end{figure}

\begin{figure}[ht]
	\centering
	\includegraphics[width=12cm]{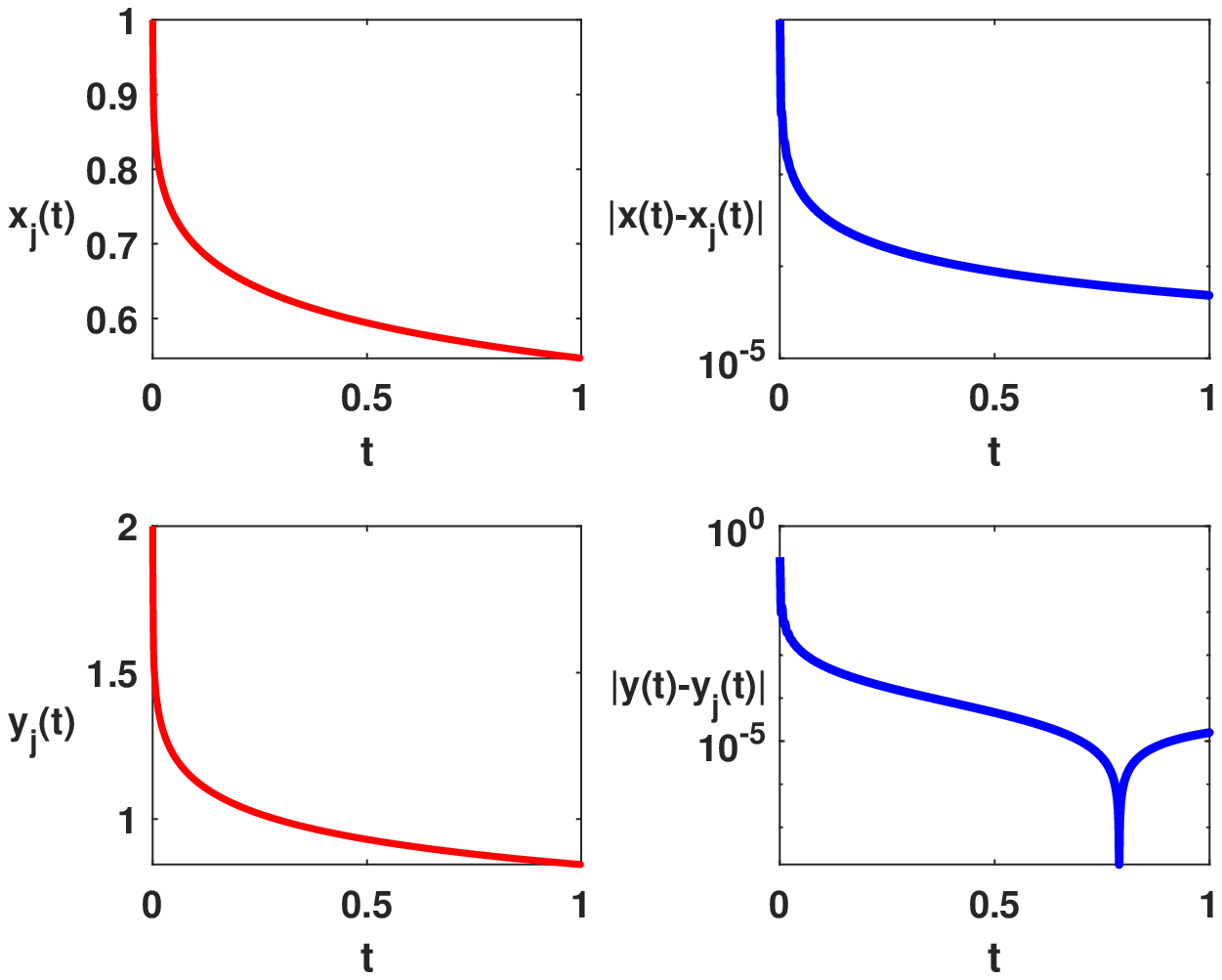}
	\caption{The numerical solutions  $x_{j}, \,y_{j}$, for $j=8$ (left panels) and the approximation error (right panels) obtained with the cubic spline when $\gamma=0.25$.}
	\label{Fig:ex2gamma0p25}
\end{figure}

\begin{figure} [ht]
	\centering
	\includegraphics[width=12cm]{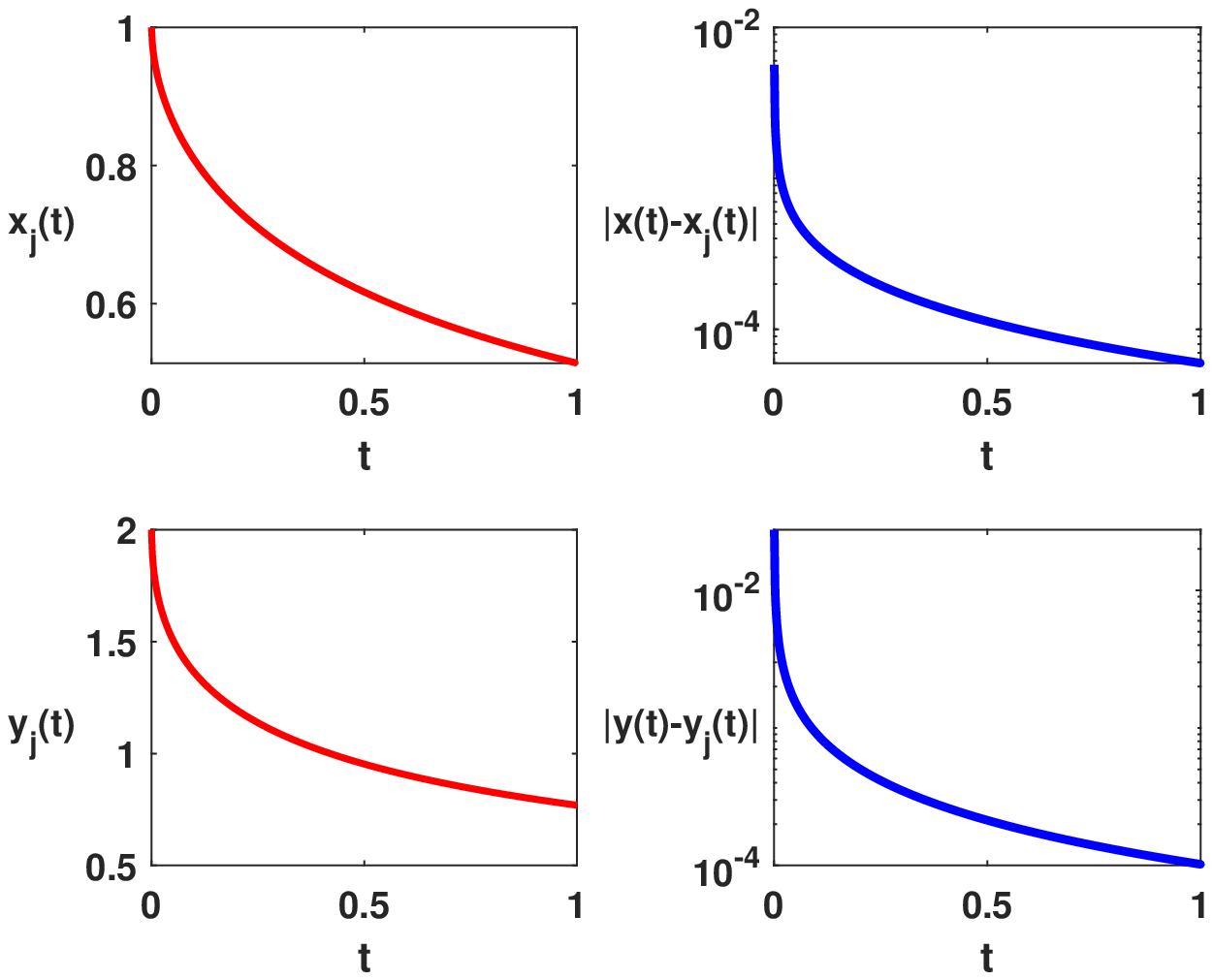} 
	\caption{The numerical solutions  $x_{j}, \,y_{j}$, for $j=8$ (left panels) and the approximation error (right panels) obtained with the cubic spline when $\gamma=0.50$.}
	\label{Fig:ex2gamma0p5}
\end{figure}

\begin{figure} [ht]
	\centering
	\includegraphics[width=12cm]{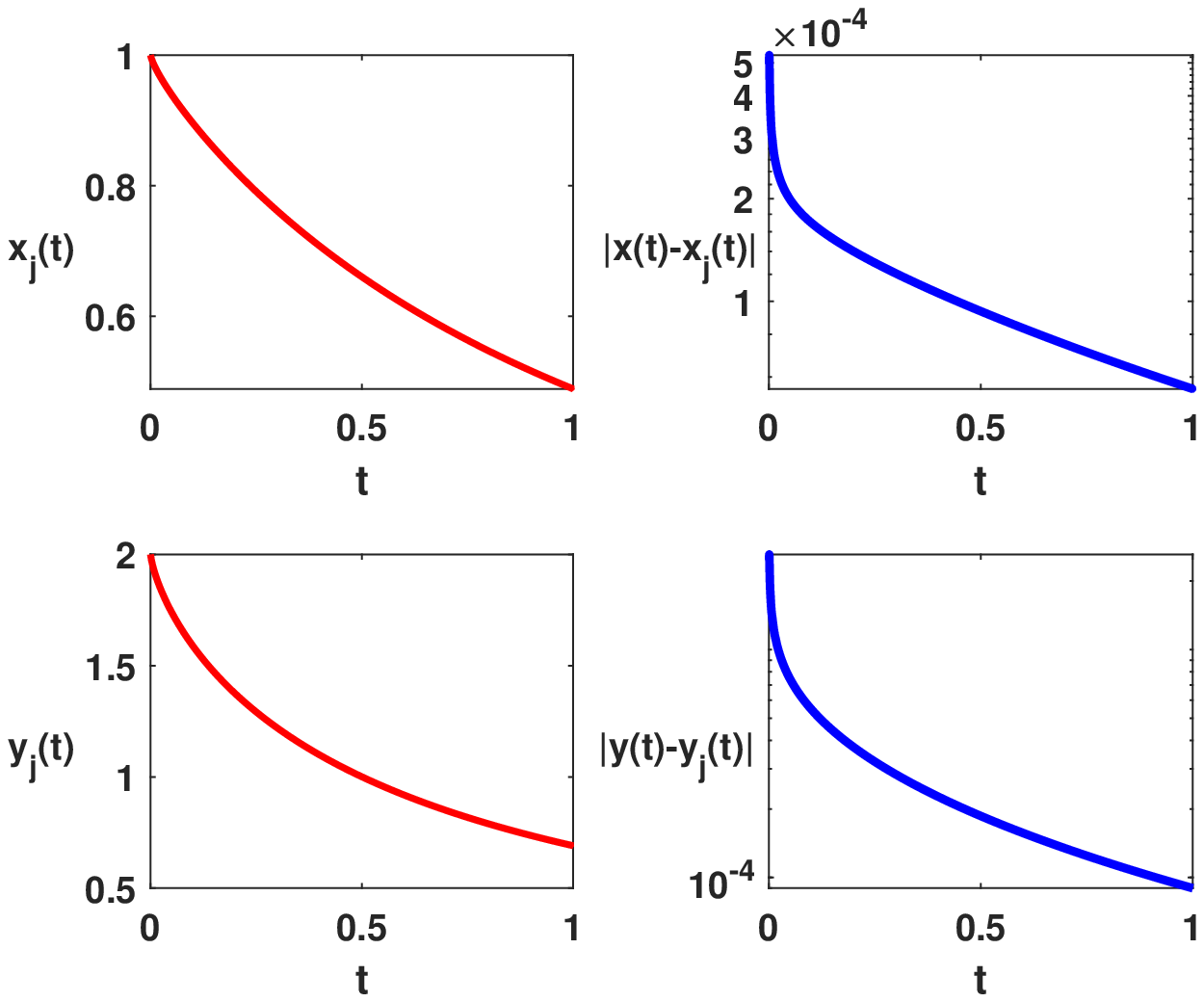}
	\caption{The numerical solutions  $x_{j}, \,y_{j}$, for $j=8$ (left panels) and the approximation error (right panels) obtained with the cubic spline when $\gamma=0.75$.}
	\label{Fig:ex2gamma0p75}
\end{figure}

\begin{figure} [ht]
	\centering
	\includegraphics[width=12cm]{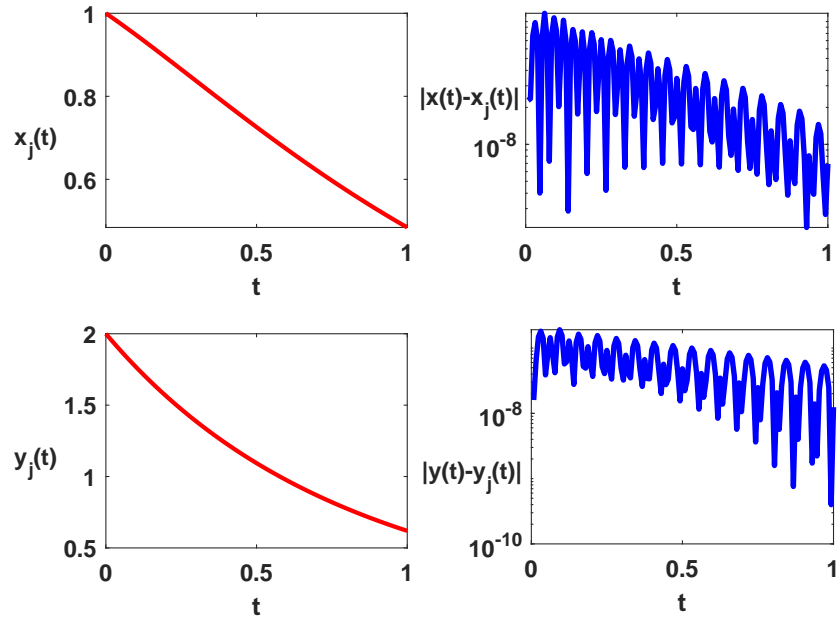}
	\caption{The numerical solutions  $x_{j}, \,y_{j}$, for $j=4$ (left panels) and the approximation error (right panels) obtained with the cubic spline for the classical problem having ordinary first derivative.}
	\label{Fig:ex2gamma1}
\end{figure}

\begin{figure} [ht]
	\centering
	\begin{tabular}{cc}
	\includegraphics[width=5cm]{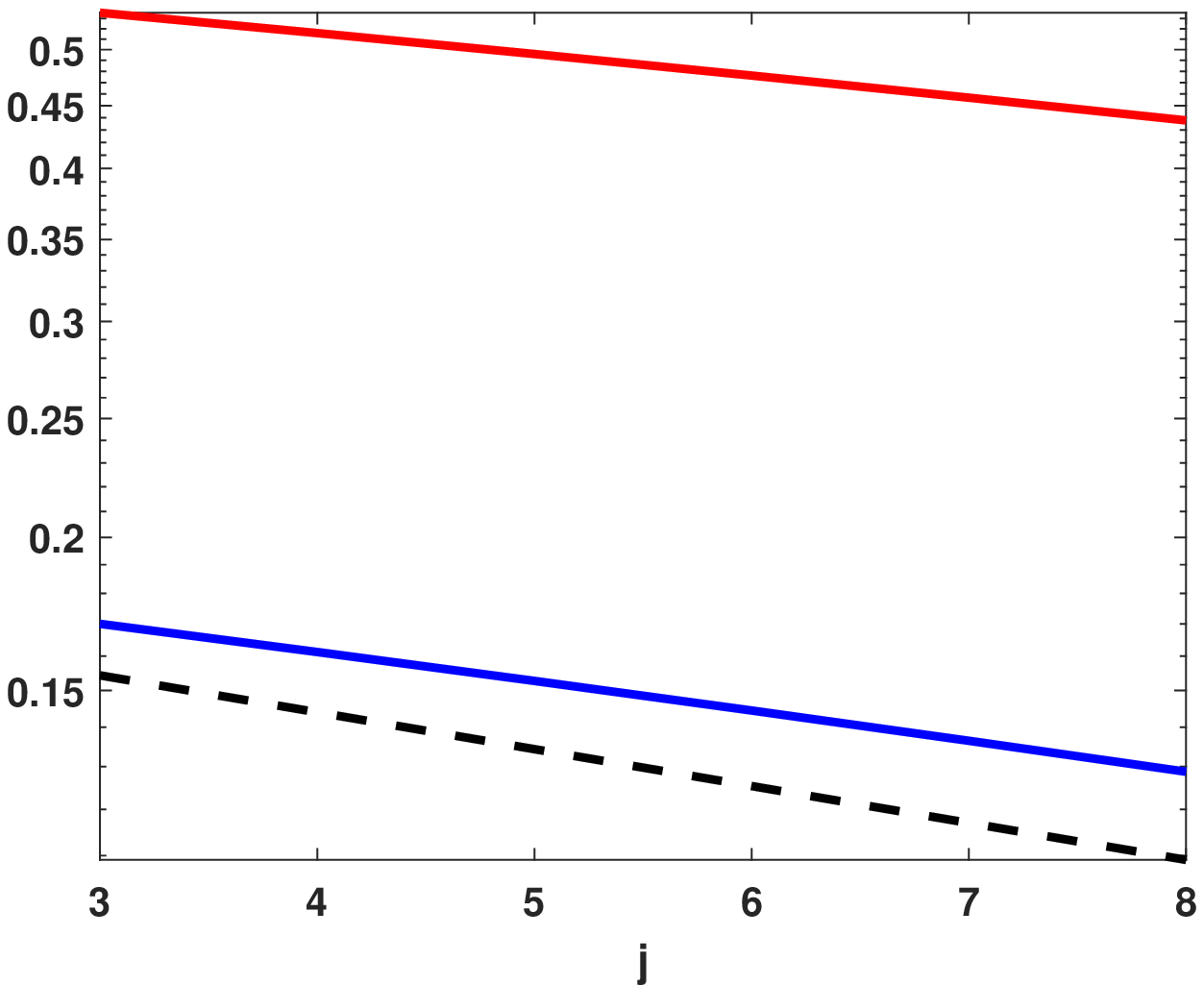} & \includegraphics[width=5cm]{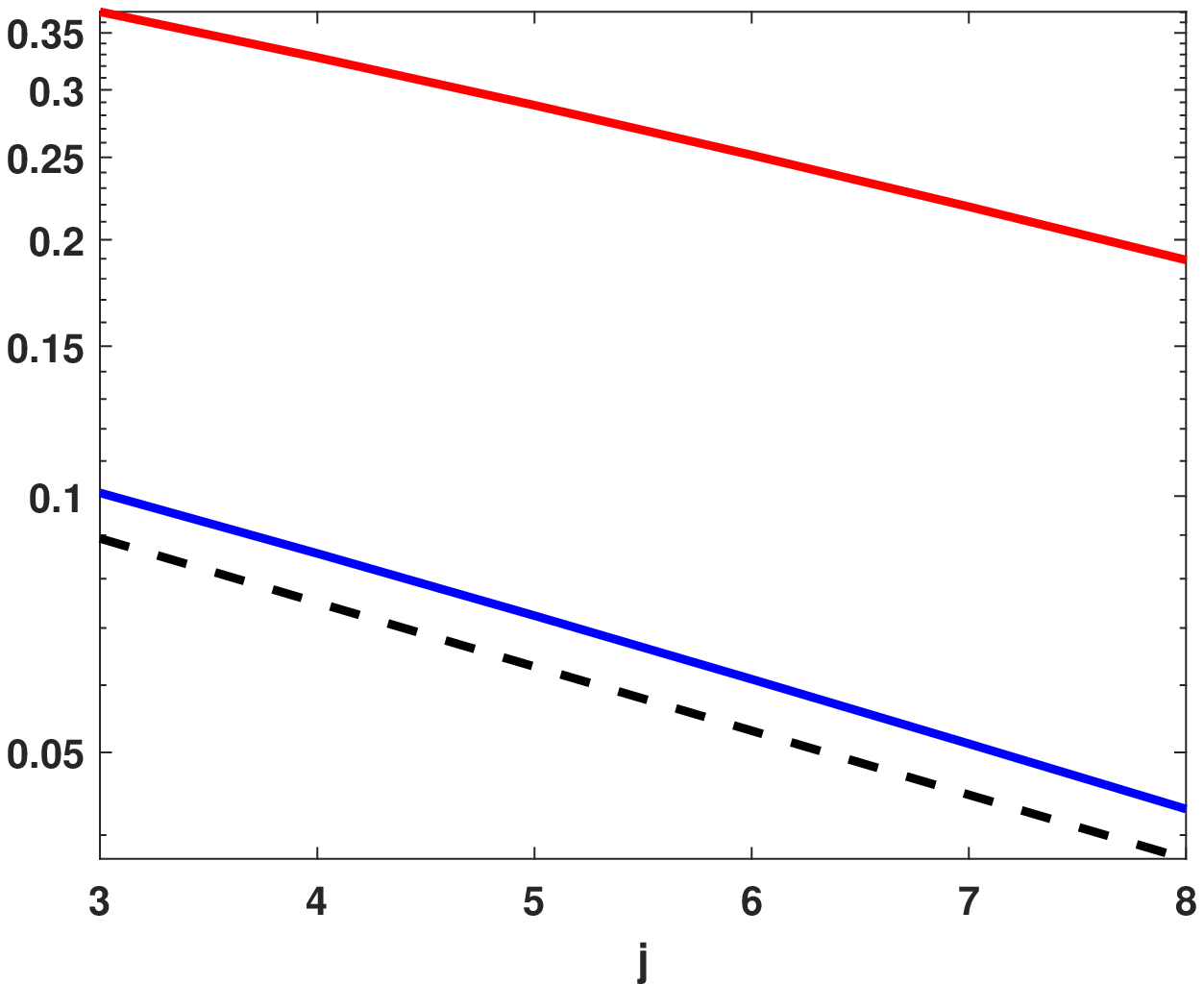} \\
	\includegraphics[width=5cm]{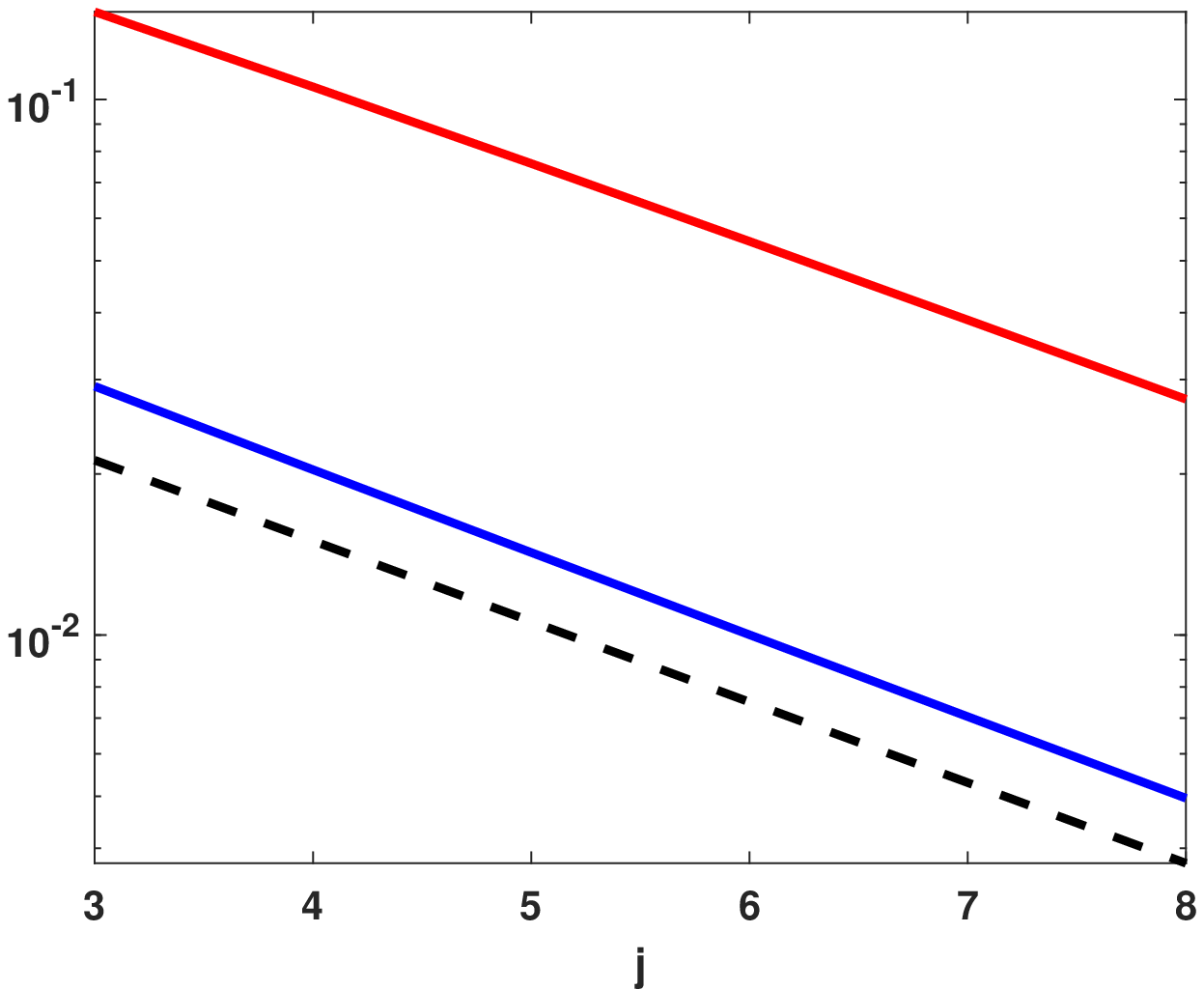} & \includegraphics[width=5cm]{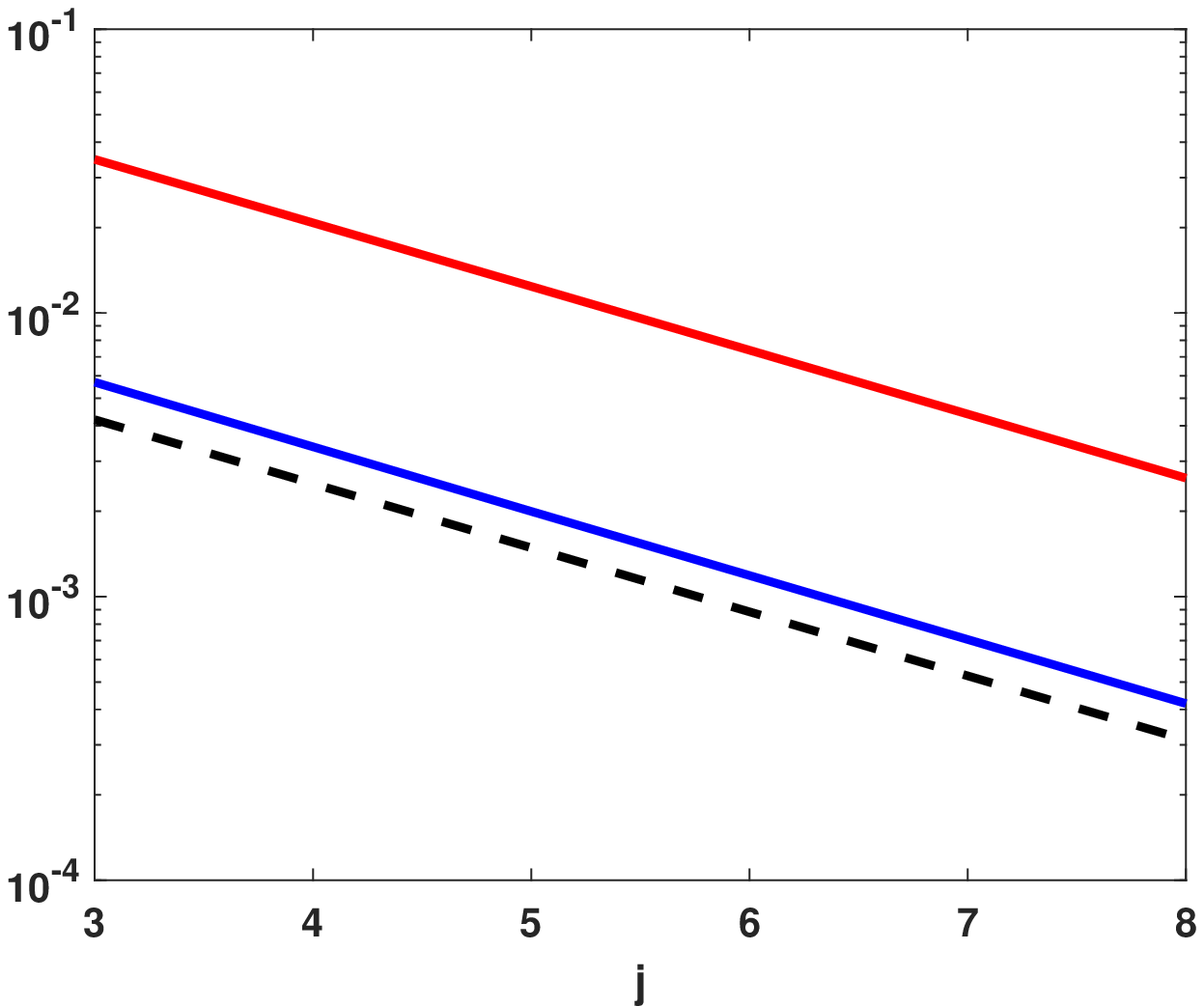} \\
	\end{tabular}
	\caption{The numerical convergence error $\rho_{\gamma,3}$ for $\gamma=0.10$ (left top panel), 0.25 (right top panel), 0.50 (left bottom panel), 0.75 (right bottom panel) and $n=3$ (red line), $n=4$ (blue line). The theoretical convergence order $2^{-j\gamma}$ is displayed as a black dashed line.}
	\label{Fig:ordine_gamma}
\end{figure}

\begin{figure} [ht]
	\centering
	\begin{tabular}{cc}
		\includegraphics[width=5cm]{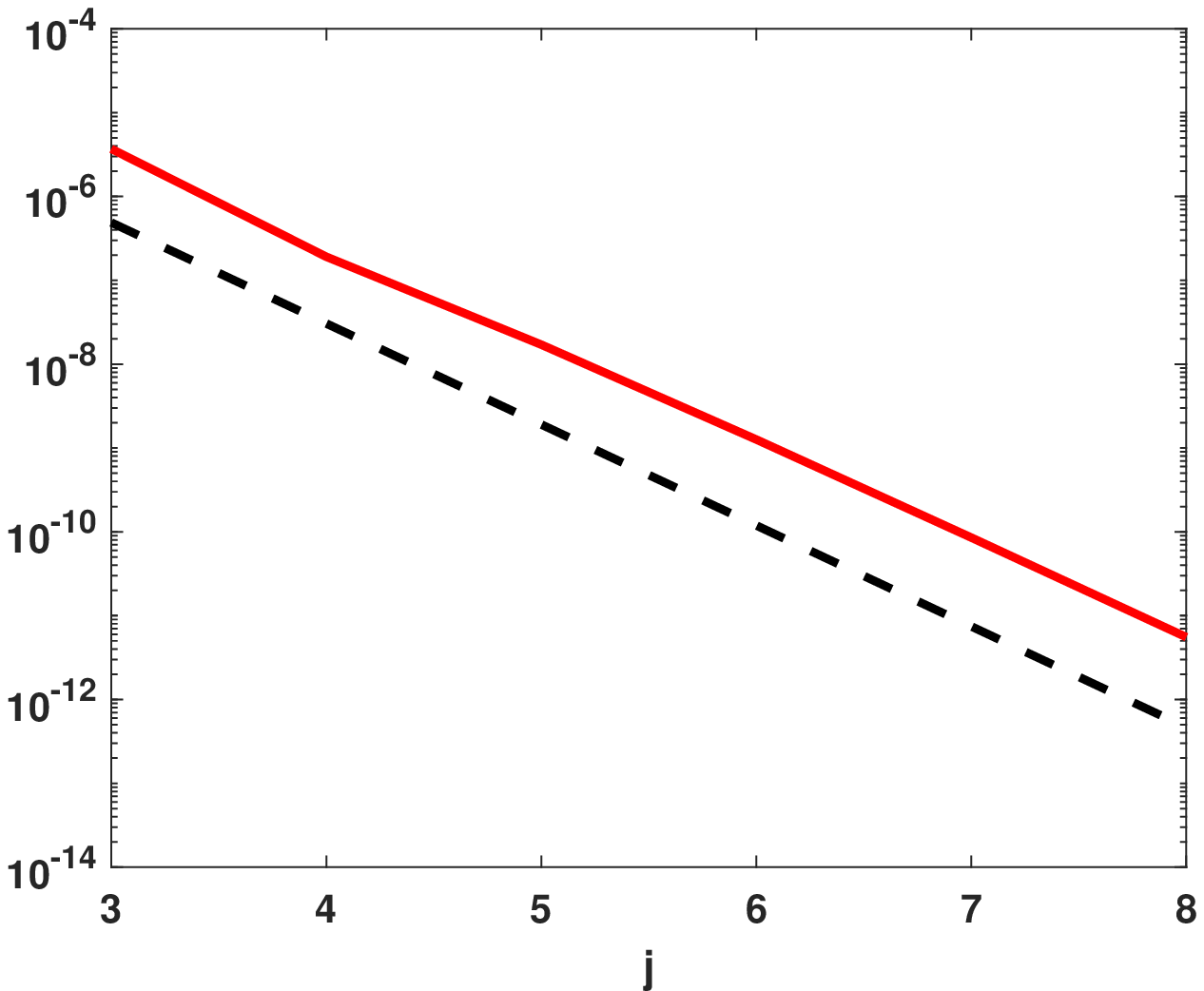} & \includegraphics[width=5cm]{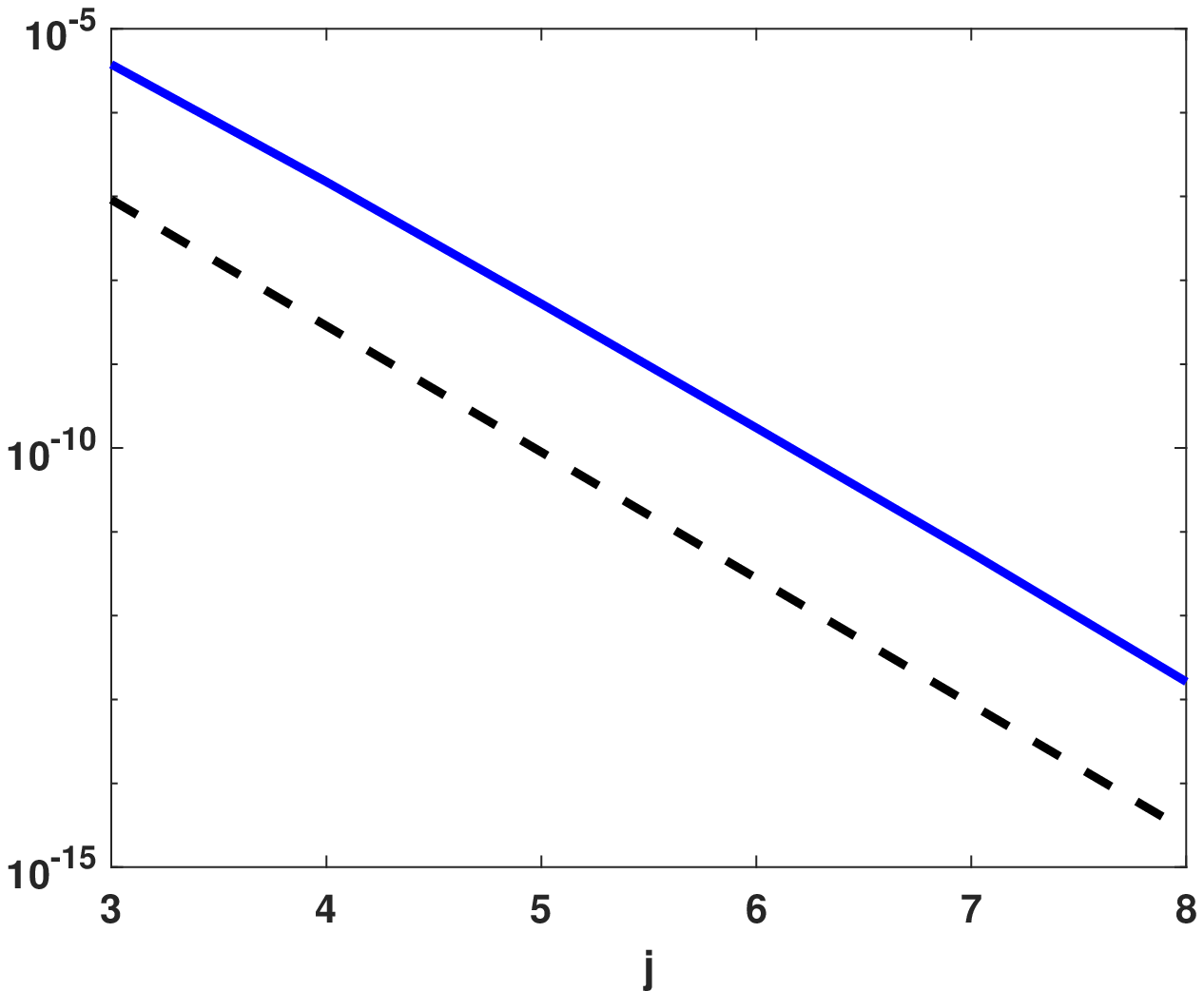} \\
	\end{tabular}
	\caption{The numerical convergence error $\rho_{1,n}$ for the classical problem with ordinary first derivative. Left panel refers to $n=3$ while right panel refers to $n=4$. The theoretical convergence order $2^{-j(n+1)}$ is displayed as a black dashed  line.}
	\label{Fig:ordine_1}
\end{figure}

\section{Conclusions}
\label{Sec:7}
We used a collocation method based on an interpolating projection operator on refinable polynomial spline spaces to approximate the solution of a linear fractional dynamical system.
We provide an explicit formula that allows us to evaluate the fractional derivatives of the approximating function in an accurate and easy way.  
The method can be used to solve several differential problems of fractional order and, in particular, nonlinear problems \cite{Pi18a,PiPe 2017} or boundary value problems \cite{PePi 2018.b}.
We notice that higher approximation order methods can be obtained by using different types of collocation points as in \cite{As78,KPT16,PeTa 11}. This will be the subject of a forthcoming paper.

\section*{Acknowledgment}
This work was partially supported by: grant of University of Roma ``La Sapienza'', Ricerca Scientifica 2017; INdAM-GNCS 2018 Project ``Sviluppo di modelli e metodi computazionali per l'elaborazione di segnali e immagini''.

\end{document}